\pgfplotsset{compat = 1.13,
        my ybar legend/.style={
            legend image code/.code={
                \draw [##1] (0cm,-0.6ex) rectangle +(2em,1.5ex);
            },
        },
}
\tikzset{every loop/.style={min distance=2mm,in=270,out=200,looseness=4}}
\newcommand{\Rho}{\mathrm{P}}
\newcommand{\CC}{\mathbb{C}}
\newcommand{\RR}{\mathbb{R}}
\newcommand{\FF}{\mathbb{F}}
\newcommand{\LinOp}[2]{(#2 A{-}#1 B)}
\newcommand{\LinOpInv}[2]{(#2 A{-}#1 B)^{-1}}
\numberwithin{theorem}{section}
\newcommand{\TheLongTitle}{A multishift, multipole rational QZ method with aggressive early deflation} 
\newcommand{\TheTitle}{Multishift, multipole RQZ}
\newcommand{\TheAuthors}{T. Steel, D. Camps, K. Meerbergen and R. Vandebril}
\headers{\TheTitle}{\TheAuthors}
\title{{\TheLongTitle}\thanks{Submitted to the editors March 12, 2019.
\funding{
The research was partially supported by
the Research Council KU Leuven, projects
C14/16/056 (Inverse-free Rational Krylov Methods: Theory and Applications),
OT/14/074 (Numerical algorithms for large scale matrices with uncertain coefficients).
}}}
\author{
  Thijs Steel\thanks{Department of Computer Science, KU Leuven, University of Leuven, 3001 Leuven, Belgium. (\email{thijs.steel@cs.kuleuven.be},  \email{karl.meerbergen@kuleuven.be}, \email{raf.vandebril@kuleuven.be})}
  \and
  Daan Camps\thanks{Computational Research Division, Lawrence Berkeley National Laboratory, Berkeley, CA, USA (\email{dcamps@lbl.gov})}
  \and
  Karl Meerbergen\footnotemark[2]
  \and
  Raf Vandebril\footnotemark[2]
}
\begin{document}

\maketitle

\begin{abstract}
The rational QZ method generalizes the QZ method by implicitly supporting rational
subspace iteration.  In this paper we extend the rational QZ method by introducing shifts
and poles of higher multiplicity in the Hessenberg pencil, which is a pencil consisting of
two Hessenberg matrices.  The result is a multishift,
multipole iteration on block Hessenberg pencils which allows one to stick to real
arithmetic for a real input pencil.  In combination with optimally packed shifts and
aggressive early deflation as an advanced deflation technique we obtain an efficient
method for the dense generalized eigenvalue problem.  In the numerical experiments we
compare the results with state-of-the-art routines for the generalized eigenvalue problem
and show that we are competitive in terms of speed and accuracy.
\end{abstract}

\begin{keywords}
  generalized eigenvalues, rational QZ, rational Krylov, multishift, multipole, aggressive
  early deflation, Fortran implementation
\end{keywords}

\begin{AMS}
  65F15, 15A18
\end{AMS}

\section{Introduction}
\label{sec:introduction}

The rational QZ method (RQZ) \cite{camps2019rational} generalizes the
standard QZ method of Moler \& Stewart \cite{Moler1973}.
Both are methods for the numerical solution of the dense,
unsymmetric generalized eigenvalue problem for  a pencil of
matrices
$A, B \in \FF^{n{\times}n}$, 
$\FF \in \lbrace \CC, \RR \rbrace$.
The set of eigenvalues of the pencil $(A,B)$ is denoted as $\Lambda$
and defined by,
\begin{equation}
\label{eq:GEP}
\Lambda = \lbrace \lambda 
= \alpha/\beta \in \bar{\CC}: \det(\beta A - \alpha B) = 0 \rbrace,
\end{equation}
with $\bar{\CC} = \CC \cup \lbrace \infty \rbrace$. 
Eigenvalues with $\beta = 0$ are located at $\infty$.
We assume throughout this paper that the pencil $(A,B)$ is
\emph{regular} which means that its characteristic polynomial
differs from zero. This implies that there are exactly $n$ eigenvalues 
including those at $\infty$.

The RQZ method acts on pencils in Hessenberg, Hessenberg form instead of the Hessenberg,
triangular form used in the QZ method. For simplicity we will write \emph{Hessenberg pencil}
when we mean a pencil with two matrices in Hessenberg form. This is an advantage if, e.g.,
the input is already in this form, like in rational Krylov methods.  It relies on
\emph{pole swapping} instead of \emph{bulge chasing}.  Pole swapping was introduced by
Berljafa and G\"uttel \cite{BeGu15} to reorder the poles in rational functions used to
generate Krylov subspaces, this is done by acting on the associated Hessenberg pencil. This technique generalizes the implicitly restarted Arnoldi approach
of Sorensen \cite{So92} to the rational Krylov setting \cite{camps2019rational}. Bulge
chasing can be considered as a special instance of pole swapping: all poles are taken
equal to infinity, allowing for an alternative implementation where bulges are chased
rather than poles are swapped.  A thorough analysis of the differences between bulge
chasing and pole swapping, and the advantages and increased flexibility of the pole
swapping approach can be found in Camps, Watkins, Vandebril, and Mach
\cite{CaVaWaMa20}. We elude a little on the similarities and differences in
\Cref{sec:ssrqz}. It is, for instance, also illustrated by Camps et al.\
\cite{CaVaWaMa20}, that the natural manner of getting optimally packed poles in the pole
swapping setting is identical to the technique of Karlsson, Kressner, and Lang
\cite{KaKrLa14} of delaying some transformations to pack bulges as close as possible
together.

The RQZ method \cite{camps2019rational} computes the generalized Schur form of $(A,B)$, namely
\begin{equation}
\label{eq:QZequivalence}
(S,T) = Q^* \, (A,B) Z,
\end{equation}
such that $(S,T)$ is a triangular, triangular pencil unitarily equivalent to
$(A,B)$. The eigenvalues of $(A,B)$ are readily available as the
ratios $s_{ii}/t_{ii}$ of the diagonal elements of $S$ and $T$.
Both the single shift RQZ method and the RQZ method with optimally packed
shifts, as formulated in \cite{camps2019rational}, are applicable
to real- and complex-valued pencils. However, it requires complex arithmetic
for real-valued pencils having complex conjugate eigenvalues.

In this paper we introduce the multishift, multipole RQZ method which acts on pencils
where both matrices are in \emph{block Hessenberg} form. Though, the theory, as presented
here holds for multishifts of any size, we have only implemented a Fortran version for
multishifts and multipoles of size $2$. We do so because larger numbers of shifts
typically lead to shift blurring \cite{Watkins1996}, with delayed convergence as a
consequence.  The main benefit of using shifts and poles of higher multiplicity ($2$ is
sufficient) is that complex conjugate pairs of shifts and poles can be represented in real
arithmetic for real-valued pencils. This is similar to the well-known implicit
double-shift QR step introduced by Francis \cite{Fra62} and the double-shift QZ step
\cite{Moler1973}. The main advantages of the results in this paper are thus for the case
$\FF = \RR$.  The multishift, multipole RQZ method no longer converges to a triangular,
triangular pencil \eqref{eq:QZequivalence}; Instead, for
$A,B \in \RR^{n \times n},$ it will converge to the real generalized Schur form,
\begin{equation}
\label{eq:realQZequivalence}
(S,T) 
= Q^T \, (A,B) Z
= \left(
\begin{bmatrix}
S_{11} & S_{12} & \hdots & S_{1m} \\
0 & S_{22} & \ddots & S_{2m} \\
\vdots & \ddots & \ddots & \vdots \\
0 & \hdots & 0 & S_{mm}
\end{bmatrix},
\begin{bmatrix}
T_{11} & T_{12} & \hdots & T_{1m} \\
0 & T_{22} & \ddots & T_{2m} \\
\vdots & \ddots & \ddots & \vdots \\
0 & \hdots & 0 & T_{mm}
\end{bmatrix}
\right),
\end{equation}
where the diagonal subpencils $(S_{ii},T_{ii})$, $i=1,\hdots,m$ are of dimension
$1{\times}1$ and $2{\times}2$ and correspond to respectively the real and complex
conjugate eigenvalues of $(A,B)$.

The paper is organized as follows.
In \Cref{sec:ssrqz} we briefly refresh the rational QZ algorithm.
In \Cref{ssec:bHessDef} we define the block Hessenberg pencils on which we will be running our
algorithm. We will also characterize essential uniqueness of these block Hessenberg
pencils in \Cref{thm:implicitQ} by linking block Hessenberg pencils via unitary
equivalences to Hessenberg pencils.
The basic algorithm, where blocks containing the shifts or poles are being swapped along the
diagonal, is introduced in \Cref{ssec:manipulating}. 
\Cref{sec:deflation} discusses
aggressive early deflation for the multishift algorithm. Some particular properties, such as numerically
reliable swapping, deflation heuristics, some parameters for the blocking, and exploiting initial sparsity of the algorithm are
presented in \Cref{sec:numerical}, after which we present the full algorithm. 
Numerical
experiments are shown in \Cref{sec:numerics}. The algorithm is implemented as part of the Fortran package \texttt{libRQZ} which
is made publicly available at \url{numa.cs.kuleuven.be/software/rqz} and
\url{github.com/thijssteel/multishift-multipole-rqz}.
The convergence and the link with rational Krylov is examined in \Cref{sec:theory}.
We conclude the paper in \Cref{sec:conclusion}.

\subsection*{Notation} 
With capital letters $A, B, \hdots$ we denote matrices, $\bm{a}, \bm{b}, \hdots$ are vectors,
and $\alpha, \beta, \hdots$ scalars. Subspaces are denoted with calligraphic letters.
For example, $\mathcal{R}(A) = \mathcal{R}(\bm{a}_1,\hdots,\bm{a}_n)$ is the column space of 
$A = [ \bm{a}_1 \hdots \bm{a}_n ]$, $\mathcal{E}_k = \mathcal{R}(\bm{e}_1, \hdots, \bm{e}_k)$
with $\bm{e}_j$ the $j$th canonical basis vector of appropriate dimension. The $k$th
order polynomial Krylov subspace generated by $A$ from starting vector $\bm{v}$ is 
$\mathcal{K}_k(A,\bm{v}) = \mathcal{R}(\bm{v}, A\bm{v}, \hdots, A^{k-1}\bm{v})$.
The tuple $(\alpha, \beta)$ is ordered and $\lbrace \alpha, \beta \rbrace$ denotes an
unordered multiset with repetition; we need this to identify the poles in the block
Hessenberg pencils.
We have already used the complex plane extended with the point at infinity,
$\bar{\CC} = \CC \cup \lbrace \infty \rbrace$ and division of a
nonzero $\alpha \in \CC$ by $0$ results in infinity.


\section{The single shift RQZ algorithm}
\label{sec:ssrqz}
We summarize the flow of the QZ - \cite{b333} and the single shift rational QZ algorithm
\cite{camps2019rational}; identify similarities and differences.

The conventional QZ algorithm is
based on bulge chasing.
To initiate a QZ step, a cleverly chosen unitary transformation $Q$
is applied to $A$ and $B$.
This transformation perturbs the
Hessenberg, triangular structure creating what is called the bulge \cite{b333}. The next
steps of the algorithm consist of restoring the structure by applying unitary equivalence
transformations that chase the bulge downwards. When the bulge slides off the pencil, a 
step is completed. If the shifts are chosen well, a deflatable eigenvalue  appears in
the bottom-right corner after a few QZ steps. 
In understanding  the QZ algorithm we exploit the link between Hessenberg, triangular
pencils and polynomial Krylov subspaces.

The RQZ algorithm is an extension of the QZ algorithm that acts on Hessenberg
pencils instead;  here there are rational Krylov subspaces lurking behind.
To define a rational Krylov subspace a rational function with a prescribed set of poles is
required; these poles are encoded as ratios of the subdiagonal elements in the
Hessenberg
pencil\footnote{Reconsidering a Hessenberg, triangular
pencil, we note that all ratios will be infinity, hence the rational function becomes
polynomial and we are in the QZ case.}.
More details on the connection with
Krylov and an analysis of the theory for the multishift multipole setting discussed in
this paper are found in \Cref{sec:theory}; The single shift rational QZ is discussed by Camps
et al.\ \cite{camps2019rational}.


Consider now a Hessenberg pencil. 
Instead of introducing a bulge, we replace the pole (ratio of first subdiagonal elements)
by another pole;  if the context requires it, we name this pole the shift-pole to
emphasize where it comes from.
Next, we chase the shift-pole downward by continuously swapping the shift-pole with the pole next
to it thereby moving it to the bottom of the pencil\footnote{This technique is equivalent to reordering a
generalized Schur factorization.}. Once at the bottom of the pencil, the shift-pole is
replaced with another pole and we are done. Again, well chosen shift-poles lead to convergence.

An advantage over classical bulge chasing algorithms is that by chasing the shift-pole 
to the bottom, all the other poles will have moved up one position. Although it can take
many iterations, the poles we set at the bottom will eventually arrive at the top. This
upward movement  also drives convergence at the top of the pencil governed by the
poles. Special cases, such as the possibility to stop in the middle of a chase (we still end up with
a Hessenberg pencil) and intriguing connections between tightly packed poles and bulges
are discussed by Camps et al. \cite{CaVaWaMa20};
Mach et al. \cite{MaStVaWa20} use bidirectional chasing  to solve
palindromic eigenvalue problems by pole swapping.

\section{Block Hessenberg pencils}
\label{sec:bHessenberg}
\label{ssec:bHessDef}

In this section we define block Hessenberg pencils.
We also specify properness, since proper block Hessenberg pencils serve as
input to the rational multishift QZ algorithm.
We do not impose constraints on the sizes of the blocks, but the accompanying software operates on
blocks of size two at most, as this allows us to stick to real arithmetic for
real pencils. We can, however, not exclude that some applications might lead
to a block Hessenberg pencil in a more general form, suitable for the
rational multishift QZ algorithm instead of running an initial reduction.


\subsection{Definitions and elementary results}

%
\begin{definition}[Block triangular matrix]
\label{def:blocktriu}
A matrix $R \in \FF^{n{\times}n}$ is called a block triangular matrix\footnote{As we will
  only use upper triangular matrices, we omit the word \emph{upper} for brevity.}
with  partition $\bm{s} = (s_1, \hdots, s_m)$, $s_1{+}\hdots{+}s_m = n$,
if it admits the form,
\begin{equation}
\label{eq:blocktriu}
\begin{bmatrix}
R_{11} & R_{12} & \hdots & R_{1m} \\
       & R_{22} & \hdots & R_{2m} \\
       &        & \ddots & \vdots \\
       &        &        & R_{mm}
\end{bmatrix},
\end{equation}
with block $R_{jk}$ of size $ s_j{\times}s_k$ for $1 \leq j \leq k \leq m$.
The vector $\bm{s}$ defines the sizes of the blocks and is called the partition vector.
The  partition can be explicitly denoted as
$R_{\bm{s}}$.
\end{definition}
We also use the notation $D_{\bm{s}} = \text{diag}(D_{11}, D_{22}, \hdots, D_{mm})$
for block diagonal matrices.
Further, notice that if $R_{\bm{s}}$ is a nonsingular block  triangular matrix,
$\hat{R}_{\bm{s}} = R_{\bm{s}}^{-1}$ is also block 
triangular with an identical  partition $\bm{s}$.


The partition of a block Hessenberg matrix is defined by the partition of the block
triangular matrix in its lower left corner.
\begin{definition}[Block Hessenberg matrix]
\label{def:blockHess}
A matrix $H \in \FF^{n{\times}n}$ is called a block  Hessenberg matrix
with  partition $\bm{s} = (s_1, \hdots, s_m)$, $s_1{+}\hdots{+}s_m = n{-}1$,
if it admits the form,
\begin{equation}
\label{eq:blockHess}
H_{\bm{s}} = 
\begin{bmatrix}
\bm{h}_{11}^T & h_{12} \\
H_{21} & \bm{h}_{22}
\end{bmatrix},
\end{equation}
with $H_{21}$ an $(n{-}1){\times}(n{-}1)$ block triangular matrix with 
partition $\bm{s}$, $\bm{h}_{11}$ and $\bm{h}_{22}$ vectors of length $n{-}1$, and $h_{12}$ a scalar.
\end{definition}

 
\begin{definition}[Block Hessenberg pencil]
\label{def:blockHesspencil}
The $n{\times}n$  pencil $(A,B)$ is a block  Hessenberg pencil
with  partition $\bm{s} = (s_1, \hdots, s_m)$ if both
\begin{equation}
\label{eq:blockHesspencil}
A = 
\begin{bmatrix}
\bm{a}_{11}^T & a_{12} \\
A_{21} & \bm{a}_{22}
\end{bmatrix}\quad
\mbox{ and }
\quad
B = 
\begin{bmatrix}
\bm{b}_{11}^T & b_{12} \\
B_{21} & \bm{b}_{22}
\end{bmatrix},
\end{equation}
are block   Hessenberg matrices with a coinciding  partition.
The block  triangular pencil $(A_{21}, B_{21})$ in \cref{eq:blockHesspencil} 
is called the \emph{pole pencil} of $(A,B)$.
If the pole pencil is regular, the poles $\Xi(A,B)$ are defined as
the eigenvalues of the pole pencil $\Lambda(A_{21},B_{21})$.
\end{definition}
The
ordering of the poles plays an important role in the convergence theory (see
\Cref{sec:theory}).  Hence, for
$(A_{21},B_{21})$ from \eqref{eq:blockHesspencil}
we define the pole tuple as 
\begin{equation}
\label{eq:poleset}
\Xi(A,B) = \Lambda(A_{21},B_{21}) = (\Xi^1, \hdots, \Xi^m) 
= ( \lbrace \xi^1_{1}, \hdots, \xi^1_{s_1} \rbrace, \hdots, \lbrace \xi^m_1, \hdots, \xi^m_{s_m} \rbrace ).
\end{equation}
%
This imposes no specific ordering of the poles within a
block, but the mutual ordering of the multisets of poles corresponds to the ordering of the blocks  in $(A_{21},B_{21})$.


\begin{example}
\label{ex:blockHesspencil}
The pencil $(A,B)$ is a $9{\times}9$ block
 Hessenberg pencil with  partition 
$\bm{s} = (2, 1, 3, 2)$ if it has the form:
\begin{center}
\begin{tikzpicture}[node distance=-1ex]
\matrix (mymatrix) [matrix of math nodes,left delimiter={[},right
delimiter={]}]
  {
	\times & \times & \times & \times & \times & \times & \times & \times & \times \\
	\times & \times & \times & \times & \times & \times & \times & \times & \times \\
	\times & \times & \times & \times & \times & \times & \times & \times & \times \\
	       &        & \times & \times & \times & \times & \times & \times & \times \\
		   &        &        & \times & \times & \times & \times & \times & \times \\
		   &        &        & \times & \times & \times & \times & \times & \times \\
		   &        &        & \times & \times & \times & \times & \times & \times \\
		   &        &        &        &        &        & \times & \times & \times \\
		   &        &        &        &        &        & \times & \times & \times \\
  };

\draw[fill={rgb:red,66;green,124;blue,244}, fill opacity=0.2,rounded corners=3pt] (mymatrix-2-1.north west) rectangle (mymatrix-9-8.south east); 
\draw[dashed] (mymatrix-1-1.north west) rectangle (mymatrix-1-2.south east);
\draw[dashed] (mymatrix-1-3.north west) rectangle (mymatrix-1-3.south east);
\draw[dashed] (mymatrix-1-4.north west) rectangle (mymatrix-1-6.south east);
\draw[dashed] (mymatrix-1-7.north west) rectangle (mymatrix-1-8.south east);
\draw[dashed] (mymatrix-1-9.north west) rectangle (mymatrix-1-9.south east);

\draw[dotted] (mymatrix-2-1.north west) rectangle (mymatrix-3-2.south east);
\draw[dotted] (mymatrix-2-3.north west) rectangle (mymatrix-3-3.south east);
\draw[dotted] (mymatrix-2-4.north west) rectangle (mymatrix-3-6.south east);
\draw[dotted] (mymatrix-2-7.north west) rectangle (mymatrix-3-8.south east);
\draw[dashed] (mymatrix-2-9.north west) rectangle (mymatrix-3-9.south east);

\draw[dotted] (mymatrix-4-3.north west) rectangle (mymatrix-4-3.south east);
\draw[dotted] (mymatrix-4-4.north west) rectangle (mymatrix-4-6.south east);
\draw[dotted] (mymatrix-4-7.north west) rectangle (mymatrix-4-8.south east);
\draw[dashed] (mymatrix-4-9.north west) rectangle (mymatrix-4-9.south east);

\draw[dotted] (mymatrix-5-4.north west) rectangle (mymatrix-7-6.south east);
\draw[dotted] (mymatrix-5-7.north west) rectangle (mymatrix-7-8.south east);
\draw[dashed] (mymatrix-5-9.north west) rectangle (mymatrix-7-9.south east);

\draw[dotted] (mymatrix-8-7.north west) rectangle (mymatrix-9-8.south east);
\draw[dashed] (mymatrix-8-9.north west) rectangle (mymatrix-9-9.south east);

\matrix (mymatrixB) [matrix of math nodes,left delimiter={[},right
delimiter={]},right=1.2cm of mymatrix]
  {
	\times & \times & \times & \times & \times & \times & \times & \times & \times \\
	\times & \times & \times & \times & \times & \times & \times & \times & \times \\
	\times & \times & \times & \times & \times & \times & \times & \times & \times \\
	       &        & \times & \times & \times & \times & \times & \times & \times \\
		   &        &        & \times & \times & \times & \times & \times & \times \\
		   &        &        &  & \times & \times & \times & \times & \times \\
		   &        &        &  & \times & \times & \times & \times & \times \\
		   &        &        &        &        &        & \times & \times & \times \\
		   &        &        &        &        &        & \times & \times & \times \\
  };

\draw[fill={rgb:red,66;green,124;blue,244}, fill opacity=0.2,rounded corners=3pt] (mymatrixB-2-1.north west) rectangle (mymatrixB-9-8.south east); 
\draw[dashed] (mymatrixB-1-1.north west) rectangle (mymatrixB-1-2.south east);
\draw[dashed] (mymatrixB-1-3.north west) rectangle (mymatrixB-1-3.south east);
\draw[dashed] (mymatrixB-1-4.north west) rectangle (mymatrixB-1-6.south east);
\draw[dashed] (mymatrixB-1-7.north west) rectangle (mymatrixB-1-8.south east);
\draw[dashed] (mymatrixB-1-9.north west) rectangle (mymatrixB-1-9.south east);

\draw[dotted] (mymatrixB-2-1.north west) rectangle (mymatrixB-3-2.south east);
\draw[dotted] (mymatrixB-2-3.north west) rectangle (mymatrixB-3-3.south east);
\draw[dotted] (mymatrixB-2-4.north west) rectangle (mymatrixB-3-6.south east);
\draw[dotted] (mymatrixB-2-7.north west) rectangle (mymatrixB-3-8.south east);
\draw[dashed] (mymatrixB-2-9.north west) rectangle (mymatrixB-3-9.south east);

\draw[dotted] (mymatrixB-4-3.north west) rectangle (mymatrixB-4-3.south east);
\draw[dotted] (mymatrixB-4-4.north west) rectangle (mymatrixB-4-6.south east);
\draw[dotted] (mymatrixB-4-7.north west) rectangle (mymatrixB-4-8.south east);
\draw[dashed] (mymatrixB-4-9.north west) rectangle (mymatrixB-4-9.south east);

\draw[dotted] (mymatrixB-5-4.north west) rectangle (mymatrixB-7-6.south east);
\draw[dotted] (mymatrixB-5-7.north west) rectangle (mymatrixB-7-8.south east);
\draw[dashed] (mymatrixB-5-9.north west) rectangle (mymatrixB-7-9.south east);

\draw[dotted] (mymatrixB-8-7.north west) rectangle (mymatrixB-9-8.south east);
\draw[dashed] (mymatrixB-8-9.north west) rectangle (mymatrixB-9-9.south east);

\node[] at (2.8,0) {,};
\node[] at (8.5,0) {.};
 \end{tikzpicture}
\end{center}
The shaded part of the matrices is the pole pencil which is in
the desired block triangular form.
Note that matrix $B$ also
allows a partition $(2,1,1,2,2)$; but, we require a matching partition for both $A$ and $B$
and hence the third block ($1\times 1$) and the fourth block ($2\times 2$)  are merged to
form a single block of size $3\times 3$.
 The pole tuple is
$\Xi(A,B) = (\lbrace \xi_1^1,\xi_2^1 \rbrace, \; \lbrace \xi_1^2 \rbrace, \;
 \lbrace \xi_1^3, \xi_2^3, \xi_3^3 \rbrace, \; \lbrace \xi_1^4, \xi_2^4 \rbrace )$, 
where $\xi_1^1$ and $\xi_2^1 $ are the eigenvalues of the principal $2\times 2$
block, and so forth.
\end{example}


Following \Cref{ex:blockHesspencil}, we note that block Hessenberg matrices and pencils can admit more than one partition. 
If $(A,B)$ is a block Hessenberg pencil with partition
$\bm{s} = (s_1, \hdots, s_k, s_{k+1}, \hdots, s_m)$,
then it also admits the partition
$\hat{\bm{s}} = (s_1, \hdots, s_k + s_{k+1}, \hdots, s_m)$; 
consecutive blocks can be grouped together.

We will prove in \Cref{sec:theory} that convergence
will only take place in-between blocks. It is therefore better to have as many blocks
as possible, meaning that none of the blocks should be splittable into smaller blocks.
To stick to real arithmetic in the real case, we can restrict ourselves to
blocks of sizes $1\times 1$ and $2\times 2$.


\subsection{Properness and uniqueness}

The final definition generalizes
\emph{properness} or \emph{irreducibility} to a block Hessenberg pencil.
Being proper guarantees that there are no obvious options for a deflation 
that splits the problem into smaller, independent problems.

\begin{definition}
\label{def:properness}
A regular block  Hessenberg pencil $(A,B)$ with partition
$\bm{s} = (s_1, \hdots, s_m)$ is said to be proper (or irreducible) if:
\begin{enumerate}[I.]
\item Its pole pencil is regular;

\item The first block column of $(A,B)$ of size $(s_1{+}1){\times}s_1$,
\begin{equation*}
\begin{bmatrix}
\bm{{a}}_{1,1}^T \\
{A}_{2,1}
\end{bmatrix}
= 
\begin{bmatrix}
\bm{a}_1 & \hdots & \bm{a}_{s_1}
\end{bmatrix},
\quad
\begin{bmatrix}
\bm{{b}}_{1,1}^T \\
{B}_{2,1}
\end{bmatrix}
= 
\begin{bmatrix}
\bm{b}_1 & \hdots & \bm{b}_{s_1}
\end{bmatrix},
\quad \bm{a}_i, \bm{b}_i \in \FF^{s_1+1},
\end{equation*}
satisfies for $i=1, \hdots, s_1$,
\begin{equation*}
\mathcal{R}(\bm{a}_1, \hdots, \bm{a}_i)
\neq
\mathcal{R}(\bm{b}_1, \hdots, \bm{b}_i);
\end{equation*}

\item The last block row of $(A,B)$ of size $s_m{\times}(s_m{+}1)$,
\begin{equation*}
\resizebox{.9 \textwidth}{!} 
{$
\begin{bmatrix}
A_{m+1,m} & \bm{a}_{m+1,m+1}
\end{bmatrix}
=
\begin{bmatrix}
\bm{a}_{s_m}^T \\ \vdots \\ \bm{a}_{1}^T
\end{bmatrix},
\quad
\begin{bmatrix}
B_{m+1,m} & \bm{b}_{m+1,m+1}
\end{bmatrix}
=
\begin{bmatrix}
\bm{b}_{s_m}^T \\ \vdots \\ \bm{b}_{1}^T
\end{bmatrix},
\quad \bm{a}_i, \bm{b}_i \in \FF^{s_m+1},
$}
\end{equation*}
satisfies for $i=1, \hdots, s_m$,
\begin{equation*}
\mathcal{R}(\bm{a}_1, \hdots, \bm{a}_i)
\neq
\mathcal{R}(\bm{b}_1, \hdots, \bm{b}_i).
\end{equation*}
\end{enumerate}
\end{definition}

Violating conditions II or III implies a deflation at either the top-left or bottom-right
corner of the pencil. Condition I ensures there are no obvious deflations anywhere in the
interior of the pencil.
We illustrate \Cref{def:properness} by an example.

\begin{example}
Consider the $4{\times}4$ real-valued block Hessenberg pencil $(A,B)$ with partition
$(2,1)$ given by:
\begin{equation*}
\left[\begin{array}{llll}
-0.300  & 0.075 & \phantom{-}0.500   & \phantom{-}0.250 \\
\phantom{-}0.395 & 0.520  & -0.350 & \phantom{-}2.000    \\
-0.140 & 0.860  & \phantom{-}1.350  & -0.800 \\
      &       &  \phantom{-}1.000   & \phantom{-}0.850
\end{array}\right], \qquad
\begin{bmatrix}
-0.150 & -0.600  & \phantom{-}0.150 & -1.500 \\
\phantom{-}0.160  & \phantom{-}0.940  & -5.000   & \phantom{-} 1.350 \\
-0.120 & -0.080 & -2.400 & -1.000   \\
      &       & \phantom{-} 0.200 & \phantom{-} 1.800
\end{bmatrix}.
\end{equation*}
Condition~I of \Cref{def:properness} is satisfied, the pole pencil is regular and the
pole tuple of $(A,B)$ is given by:
\begin{equation}
\label{eq:polesimproperpencil}
\Xi = (\lbrace 1.5 + i\sqrt{15/8}, 1.5 - i\sqrt{15/8} \rbrace, 5).
\end{equation}
Condition~III of \Cref{def:properness} is also satisfied.
For the last block row of $(A,B)$, we clearly have that
$\mathcal{R}(\begin{bmatrix}
1 & 0.85
\end{bmatrix}^T)
\neq
\mathcal{R}(\begin{bmatrix}
0.2 & 1.8
\end{bmatrix}^T)
$.
Notice that this implies that we cannot simultaneously create a zero
in position $(4,3)$ of both $A$ and $B$ by rotating the last two columns. 
The pencil is, however, \emph{improper}
as Condition~II of \Cref{def:properness} is violated.
We have $\mathcal{R}(\bm{a}_1) \neq \mathcal{R}(\bm{b}_1)$, but
$\mathcal{R}(\bm{a}_1,\bm{a}_2) = \mathcal{R}(\bm{b}_1,\bm{b}_2)$.
If we compute an orthonormal basis $Q_1$ of $\mathcal{R}(\bm{a}_1,\bm{a}_2)$
and extend this upto an orthonormal matrix $Q = \begin{bmatrix} Q_1 & \bm{q}_2\end{bmatrix}$,
then $(\hat{A},\hat{B}) = Q^T (A,B)$ has zero elements in positions $(3,1)$
and $(3,2)$.
This allows us to split the problem into two submatrices and deflate 
the eigenvalues of the leading $2\times 2$ subpencil, which are the complex conjugate  poles in
\cref{eq:polesimproperpencil}.
\end{example}

The next theorem generalizes the implicit Q theorem to block Hessenberg pencils. We
define first what \emph{essentially unique} means in the block formulation.

\begin{definition}
  \label{defi:esu}
  Two block Hessenberg pencils $(\hat{A},\hat{B})$ and $(\check{A},\check{B})$ with
  partitions $\hat{\bm{s}}$ and $\check{\bm{s}}$ respectively are said to
  be essentially identical if there exist  equivalences  with unitary block
  diagonal matrices such that
\begin{equation*}
  \hat{D}_1^* (\hat{A},\hat{B}) \hat{D}_2 = \check{D}^*_1 (\check{A},\check{B}) \check{D}_2,
\end{equation*}
where $\hat{D}_1$ has partition $(1,\hat{\bm{s}})$ and $\hat{D}_2$ has partition
$(\hat{\bm{s}},1)$; and $\check{D}_1$ and $\check{D}_2$ have partitions
$(1,\check{\bm{s}})$ and $(\check{\bm{s}},1)$.
\end{definition}

Though not mentioned explicitly in the definition, we will see in the proof of
\Cref{thm:implicitQ} that one can transform two essentially identical block Hessenberg pencils
to the same Hessenberg pencil. Moreover, suppose that we have a real block Hessenberg pencil then the
pencil will always be essentially identical to a real block Hessenberg pencil having the
$B$ matrix of Hessenberg form; this is the form we will use in our implementation.


To simplify the formulation of the next theorem we clarify the wording \emph{extracting a
list of poles}.
The pole tuple is a list of multisets; with extracting a list of poles out of the pole
tuple we mean assigning an ordering to elements in the multisets. Suppose, as an example,
that we have a pole tuple $(\{3,2,1\},4,\{6,5\},\{8,7\})$, then we can extract various
lists out of it, such as, e.g., $(3,2,1,4,6,5,8,7)$, but also $(1,3,2,4,5,6,7,8)$. There
are typically many possibilities of extracting a list of poles out of a pole tuple.
Suppose the pencil $(\hat{A},\hat{B})$ has pole set $(\{3,2,1\},4,\{6,5\},\{8,7\})$ and
$(\check{A},\check{B})$ has pole set $(1,\{5,4,3,2\},\{7,6\},8)$.
Then we can extract a list of poles $(1,2,3,4,5,6,7,8)$ that is feasible for both
pencils, this is required in the next theorem.

\begin{theorem}
\label{thm:implicitQ}
Let $(A,B)$ be a  proper matrix pencil and let $\hat{Q}, \check{Q}, \hat{Z}, \check{Z}$
be unitary matrices with $\hat{Q}\bm{e}_1 = \sigma \check{Q} \bm{e}_1$, $|\sigma| = 1$,
such that,
\begin{equation*}
(\hat{A}, \hat{B}) = \hat{Q}^* (A,B) \hat{Z}
\quad \text{and} \quad
(\check{A}, \check{B}) = \check{Q}^* (A,B) \check{Z}
\end{equation*}
are block Hessenberg pencils.
If we can extract an identical list of poles out of the pole tuples of both pencils, then 
the pencils $(\hat{A}, \hat{B})$ and $(\check{A},\check{B})$ are essentially unique.
\end{theorem}

\begin{proof} Consider the generalized Schur factorization of each individual diagonal block in the pole
   pencils of $(\hat{A},\hat{B})$ and $(\check{A},\check{B})$, where we have ordered the
   eigenvalues in the resulting Schur forms in the same manner for both pencils. We know,
   because of the assumptions on the poles in
   the theorem, that such an ordering must exist.

   Since the pole pencils are of size $(n-1)\times (n-1)$, we must embed the unitary
   transformations in an $n\times n$ identity matrix before we can apply them on the
   pencils directly.  As a result we get a multiplication with block diagonal unitary
   matrices resulting in two Hessenberg pencils with identical pole tuples
   \begin{equation*}
      \hat{D}^*_1 (\hat{A},\hat{B}) \hat{D}_2
      \quad \text{and} \quad
      \check{D}^*_1 (\check{A},\check{B}) \check{D}_2, 
   \end{equation*}
   where, because of the embedding $\hat{D}^*_1 \bm{e}_1= |\hat{\sigma}| \bm{e}_1$,
   $\check{D}^*_1 \bm{e}_1= |\check{\sigma}| \bm{e}_1$, $\hat{D}_2 \bm{e}_n=
   |\hat{\gamma}| \bm{e}_n$, and $\check{D}_2 \bm{e}_n= |\check{\gamma}| \bm{e}_n$,
   and $\hat{\sigma}$, $\check{\sigma}$, $\hat{\gamma}$, and $\check{\gamma}$ are
   unimodular.
   
   It is easy to verify that both pencils will be proper and looking back at the original pencil $(A,B)$
   we obtain $\hat{D}^*_1 \hat{Q}^* (A,B) \hat{Z} \hat{D}_2$
   and
   $ \check{D}^*_1 \check{Q}^* (A,B) \check{Z}  \check{D}_2 $
  on which we can apply the  rational implicit Q theorem \cite[Theorem
  5.1]{camps2019rational}. As a result both 
 Hessenberg pencils are essentially unique, thereby proving  the theorem.
\end{proof}


This theorem does not only justify our definition of essential uniqueness, it also provides
a manner of mapping block Hessenberg pencils to Hessenberg pencils. This mapping is not
unique, it is essentially unique in the block sense. It should thus be clear that results that hold for
the Hessenberg case, such as convergence, still hold, but only in-between blocks. We can not
make claims of what is happening within a block.



\section{The basic algorithm}
\label{ssec:manipulating}

The algorithm operates on proper pencils, otherwise it will break down;
if the pencil is improper, deflation is possible, and the problem should be split into
independent subproblems.  We assume to be operating on a
proper block Hessenberg pencil $(A,B)$ with partition
$\bm{s} = (s_1,\hdots,s_m)$ and pole tuple $\Xi =(\Xi^1, \hdots, \Xi^m)$,
where  $\Xi^j$  is a multiset of poles.
All poles
are assumed different from the eigenvalues.
We review two different operations to change the pole tuple $\Xi$.
The first operation replaces the first or last blocks of pole of the pencil,
the second operation swaps two adjacent pole blocks. 

Given a matrix pencil $(A, B) \in \FF^{n \times n}$ (generic, not necessarily block Hessenberg), with
$\varrho = \mu/\nu \in \bar{\CC}$ and 
$\xi = \alpha/\beta \in \bar{\CC}\setminus\Lambda$, 
we define the following elementary rational matrices,
\begin{equation}
\label{eq:elemRational}
\begin{split}
M(\varrho,\xi) = \LinOp{\mu}{\nu} \LinOpInv{\alpha}{\beta}, \\
N(\varrho,\xi) = \LinOpInv{\alpha}{\beta} \LinOp{\mu}{\nu}.
\end{split}
\end{equation}
Notice that the matrices $M(\varrho,\xi)$ and $N(\varrho,\xi)$ represent
an entire class of matrices that are all nonzero scalar multiple of each other.
Every representative is fine.

\paragraph{Changing poles at the boundary}


The first $\ell=s_1{+}\hdots{+}s_i$ poles
in the first $i$ pole blocks $\Xi^1, \hdots, \Xi^i$ can be replaced by
$\ell$ new poles $\Rho = \lbrace \varrho_1, \hdots, \varrho_{\ell} \rbrace$, which are
considered different from the original poles.
These new poles are classically called the shifts. In fact they are nothing else than
poles, but when it is necessary to emphasize their origin, we call them shift-poles.
To introduce the shift-poles consider the vector,
\begin{equation}
\label{eq:vec_multishift}
\bm{x} = \gamma \; \prod_{j=1}^{\ell} M(\varrho_j,\xi_j) \; \bm{e}_1,
\end{equation}
with $\xi_1, \hdots, \xi_{\ell}$ the poles of $\Xi^1, \hdots, \Xi^i$.
 Now compute a unitary matrix $Q$ such that,
 \begin{equation}
 \label{eq:Qmultishift}
 Q^* \bm{x} = \alpha \bm{e}_1.
 \end{equation}
 We will prove in \Cref{sec:theory} that
the new poles $P$ are introduced in the block Hessenberg pencil by updating
$(\hat{A},\hat{B}) = Q^* (A,B)$. 
More precisely we end up with    a block
Hessenberg pencil $(\hat{A},\hat{B})$ with partition $\hat{\bm{s}} = (\ell, s_{i+1}, \hdots, s_m)$ 
and poles $\hat{\Xi} = (\Rho, \Xi^{i+1}, \hdots, \Xi^{m})$.

The last $\ell$ poles, say $\xi_j$ for $j= m-\ell+1, \ldots m$, the last $i$ blocks $\Xi^{m-i+1}, \hdots, \Xi^{m}$
of $(A,B)$ can be changed to $\Rho = \lbrace \varrho_1, \hdots, \varrho_\ell \rbrace$ 
in a similar fashion.
We compute first the row vector,
\begin{equation}
\label{eq:vec_multishift_end}
\bm{x}^T = \gamma \bm{e}_{n}^T \prod_{j=m-\ell+1}^{m} N(\varrho_j,\xi_j),
\end{equation}
and then a unitary matrix $Z = \text{diag}(I,Z_{\ell+1})$ such that
$\bm{x}^T Z = \alpha \bm{e}_n^T$. The pencil $(\hat{A},\hat{B}) = (A,B) Z$ then becomes
 block Hessenberg with pole tuple $(\Xi^1$, $\hdots$, $\Xi^{m-i}$, $\Rho)$.

In order to compute the vector $\bm{x}$ of \Cref{eq:vec_multishift},
$\ell$ shifted linear systems need to be solved as 
$M(\varrho_i,\xi_i) = \LinOp{\mu_i}{\nu_i} \LinOpInv{\alpha_i}{\beta_i}$. 
These linear systems are essentially of size $\ell$ because $\LinOpInv{\alpha_\ell}{\beta_\ell}$
is a block  triangular matrix with a leading block of size $\ell{\times}\ell$.
This limits the computational cost of computing $\bm{x}$ to $O(\ell^3)$, which
is small as long as $\ell \ll n$.
It also follows that the vector $\bm{x}$ can be computed even when poles in 
$\Xi^{1}$,$\hdots$, $\Xi^{i}$ are equal to eigenvalues of the pencil not present in the
leading block, since properness ensures this.

We remark that if $(A,B)$ is a real-valued pencil and the poles and shift-poles considered 
in \cref{eq:vec_multishift,eq:vec_multishift_end} are both closed under complex conjugation,
then the vectors $\bm{x}$ and $\bm{x}^T$ and consequently the matrices $Q$ and $Z$ are also
real-valued. This follows from the commutativity of the elementary rational matrices $M(\varrho,\xi)$ and $M(\bar{\varrho},\bar{\xi})$  (see \cite{camps2019rational}) in combination with the property
that $M(\bar{\varrho},\bar{\xi}) = \overline{M(\varrho,\xi)}$ for real-valued pencils.
We have,
\begin{equation}
\label{eq:realvalued}
\overline{M(\varrho,\xi) M(\bar{\varrho},\bar{\xi})} =
\overline{M(\bar{\varrho},\bar{\xi}) M(\varrho,\xi)} =
M(\varrho,\xi) M(\bar{\varrho},\bar{\xi})
\end{equation}
so $M(\varrho,\xi) M(\bar{\varrho},\bar{\xi})$ is a real-valued matrix if $A$ and $B$ are real-valued.

\paragraph{Swapping adjacent pole blocks}

Swapping block $i$ with block $i{+}1$ requires the computation of a unitary equivalence
 of size $(s_i{+}s_{i+1}){\times}(s_i{+}s_{i+1})$ that updates
the pencil $(\hat{A},\hat{B}) = Q^* (A,B) Z$ in such a way
that the new pole tuple and partition vector are given by,
\begin{equation*}
\begin{split}
\hat{\Xi} &= (\Xi^1, \hdots, \Xi^{i-1}, \Xi^{i+1}, \Xi^{i}, \Xi^{i+2}, \hdots, \Xi^m), \\
\hat{\bm{s}} &= (s_1, \hdots, s_{i-1}, s_{i+1}, s_{i}, s_{i+2}, \hdots, s_m).
\end{split}
\end{equation*}
This problem is equivalent to reordering eigenvalues in the generalized
Schur form. 
Two different approaches to solve this problem have been proposed in the literature.
The first approach, studied by K{\aa}gstr{\"{o}}m \cite{Kagstrom1993,Kagstrom1996},
requires the solution of a coupled Sylvester equation.
This method is applicable for general blocksizes.
The second approach, studied by Van Dooren \cite{VanDooren1981}, is a direct method
that relies on the computation of a right eigenvector of a pole in block $i{+}1$.
This method has been studied for swapping a block of dimension $1{\times}1$ or $2{\times}2$
with a block of dimension $1{\times}1$, or vice versa. In our implementation we combine both
techniques and use iterative refinement. More details are discussed in \Cref{sec:numerical}.

\paragraph{Multishift, multipole RQZ step}
\label{ssec:msmpstep}

We propose the following three step
procedure as the generic multishift, multipole RQZ step. 

\begin{enumerate}[I.]

\item \label{step:MSMPRQZ1} 
Starting from a proper block Hessenberg pencil with pole tuple
$\Xi = (\Xi^1, \hdots, \Xi^m)$ and partition
$\bm{s} = (s_1, \hdots, s_m)$. Select or compute 
$\ell$ shifts $\Rho$.
Introduce the shift-poles in the block Hessenberg pencil.
The pencil now has pole tuple $\Xi = (\Rho, \Xi^{i+1}, \hdots, \Xi^m)$
and partition vector $\bm{s} = (\ell, s_{i+1}, \hdots, s_m)$.

\item \label{step:MSMPRQZ2}
Repeatedly use the swapping procedure to construct unitary
equivalences that swap the block carrying the shift-poles  with the next pole-block in line.
In the end the shift-poles reach the last block and the pole tuple equals $\Xi = (\Xi^{i+1}, \hdots, \Xi^m,\Rho)$
and the partition vector  $\bm{s} = (s_{i+1}, \hdots, s_m, \ell)$.

\item \label{step:MSMPRQZ3}
Compute or select $\ell$ new poles $\Xi^{m+1}$ and introduce them
at the end of the pencil to change the pole tuple to $\Xi = (\Xi^{i+1}, \hdots, \Xi^m,\Xi^{m+1})$.
\end{enumerate}
These three steps constitute a single multishift, multipole RQZ sweep.
After every sweep, the properness of the pencil is checked and the problem
is split into independent subproblems wherever possible. We will prove in \Cref{sec:theory}
that continuously executing RQZ sweeps with well-chosen shifts will lead to deflations and eventually
converges to the Schur form. Moreover, we stress that this is the basic form, an enhanced
form of the algorithm, with aggressive deflation (see \Cref{sec:deflation}) is discussed in \Cref{sec:heuristics}.

The multishift QZ method is a special case of this algorithm where
the pencil initially has pole tuple $(\infty, \hdots, \infty)$ 
and partition $(1,\hdots,1)$ and where this form is always restored in
step~\ref{step:MSMPRQZ3} of the algorithm by only allowing the introduction of poles at infinity.

The shift-poles will create convergence in the lower-right corner of the matrix, the poles
move up slowly and will lead to deflations in the upper-left corner of the matrix. In the
classical QZ algorithm, it is not possible to stear convergence in the upper-left
corner, the poles moving up, will always be equal to $\infty$.
Again, we defer the
theoretical analysis of the convergence to \Cref{sec:theory}.


\section{Aggressive early deflation}
\label{sec:deflation}

Aggressive early deflation (AED) significantly speeds up the convergence of the QR \cite{Braman2002a} 
and QZ \cite{Kagstrom2007} methods by identifying deflatable eigenvalues before classical
deflation criteria are able to detect them: We will check whether the eigenvalues of leading and
trailing principal subpencils are eigenvalues of the original pencil. 


Because the shift-poles lead
to convergence in the bottom-right corner of the pencil and the poles cause convergence 
in the upper-left corner, AED can be
performed at both sides of the pencil. We present the description of the AED process
only for the upper-left sides of the pencil, the bottom-right proceeds similarly. 

The deflation window sizes are $w_e$ for the bottom-right  and $w_s$ for the upper-left side of
the pencil, they are chosen to cover an integer number of blocks, avoiding thereby subdivision of blocks.
The deflation windows are shown in Pane I of \Cref{fig:aed}. The window sizes used in our
practical implementation  are found in \Cref{sec:heuristics}.

The pencil $(A,B)$ is subdivided as follows
\begin{equation}
\label{eq:aed_part1}
\begin{blockarray}{c|cc|cc}
w_s &  &  & w_e \\
\begin{block}{[c|cc|c]c}
A_{11} & A_{12} & A_{13} & A_{14} & w_s \\
\cline{1-5}
A_{21} & A_{22} & A_{23} & A_{24} &  2 \\
       & A_{32} & A_{33} & A_{34} & \\
\cline{1-5}
       &        & A_{43} & A_{44} & w_e \\
\end{block}
\end{blockarray}, \quad
\begin{blockarray}{c|cc|cc}
w_s &  &  & w_e \\
\begin{block}{[c|cc|c]c}
B_{11} & B_{12} & B_{13} & B_{14} & w_s \\
\cline{1-5}
B_{21} & B_{22} & B_{23} & B_{24} & 1 \\
       & B_{32} & B_{33} & B_{34} & \\
\cline{1-5}       
       &        & B_{43} & B_{44} & w_e \\
\end{block}
\end{blockarray}
,
\end{equation}
where we assume $A_{21}$ to have two rows, indicating that we have a $2\times 2$ block
just after the deflation window. We restrict the size of $A_{21}$ and of the other blocks
to be at most $2\times 2$ since it simplifies the presentation and it is sufficient for
the real double-shift rational QZ algorithm.  Recall from the discussion following \Cref{thm:implicitQ} that
we have taken the design decision to have $B$ in Hessenberg form, as a consequence
$B_{21}$ has only one row.
The subpencils $(A_{11},B_{11})$ and $(A_{44},B_{44})$ are the upper-left and bottom-right deflation
windows.

\begin{figure}[htp]
\label{fig:aed}
\centering
\resizebox{\textwidth}{!}{%
\input{./fig/figaed.tikz}
}
\caption{Visualization of the three stages of aggressive early deflation.
}
\end{figure}

In the first phase, shown in pane II of \Cref{fig:aed},  the parts of the pencil
within the deflation windows are reduced to quasi Schur form (block upper triangular with blocks of size
at most $2$):
\begin{equation}
\label{eq:aed_part2}
(S_{11},T_{11}) = Q^{T}_{s} (A_{11},B_{11}) Z_{s},
\quad \text{and} \quad
(S_{44},T_{44}) = Q^{T}_{e} (A_{44},B_{44}) Z_{e}.
\end{equation}
This can be done with the
RQZ method as all subpencils in the deflation windows are in block Hessenberg form.
Applying these transformations as an equivalence to $(A,B)$ results in the pencil $(\check{A},\check{B})$:
\begin{equation}
\label{eq:aed_part3}
\resizebox{.89\hsize}{!}{$
\begin{blockarray}{c|cc|c}
\begin{block}{[c|cc|c]}
S_{11}       & Q^{T}_{s} A_{12} & Q^{T}_{s} A_{13} & Q^{T}_{s} A_{14} Z_{e}\\
\cline{1-4}
A_{21} Z_{s} & A_{22}           & A_{23}           & A_{24} Z_{e}           \\
             & A_{32}           & A_{33}           & A_{34} Z_{e}       \\
\cline{1-4}
             &                  & Q^{T}_{e} A_{43} & S_{44}             \\
\end{block}
\end{blockarray},
\begin{blockarray}{c|cc|c}
\begin{block}{[c|cc|c]}
T_{11}       & Q^{T}_{s} B_{12} & Q^{T}_{s} B_{13} & Q^{T}_{s} B_{14} Z_{e} \\
\cline{1-4}
B_{21} Z_{s} & B_{22}           & B_{23}           & B_{24} Z_{e} \\
             & B_{32}           & B_{33}           & B_{34} Z_{e} \\
\cline{1-4}       
             &                  & Q^{T}_{e} B_{43} & T_{44} \\
\end{block}
\end{blockarray}.
$}
\end{equation}

The spikes shown in pane~II of \Cref{fig:aed} correspond to the matrices
$A_{21}Z_s$, $B_{21} Z_s$, $Q^{T}_{e} A_{43}$, and $Q^{T}_{e} B_{43}$, with 
$B_{21} Z_s$ of dimension $1{\times}w_s$ and
%
$A_{21}Z_s$ of dimension $2{\times}w_s$.
The two rows of $A_{21} Z_s$
are scalar multiples of each other and also a multiple of $B_{21} Z_s$.
We denote with $\bm{p}_{s}^{B} = b_{w_s+1,w_s} \bm{e}_{w_s}^T Z_{s} =
b_{w_s+1,w_s} \bm{p}_s$
the spike at the upper-left deflation window of $B$. Similarly,
$\bm{p}_{s}^{A} = \zeta \bm{e}_{w_s}^T Z_{s} = \zeta \bm{p}_s$, with $\zeta$ equal to
$|a_{w_s+1,w_s}|+|a_{w_s+2,w_s}|$. Taking the sum makes the deflation check, discussed further on, easier.

The second phase in the AED process is illustrated in Pane~III of \Cref{fig:aed} and entails
testing for deflatable eigenvalues inside the deflation windows. The deflation test starts at
the left-side of the spikes $\bm{p}_{s}^{A}$ and $\bm{p}_{s}^{B}$.
If there is a $1{\times}1$  eigenvalue
located at this position, we test if:
\begin{equation}
\label{eq:aed_part5}
| \bm{p}_{s,1}^{A} | < c \epsilon_{m} (|a_{1,1}| + |a_{2,2}|)
\quad \text{and} \quad
| \bm{p}_{s,1}^{B} | < c \epsilon_{m} (|b_{1,1}| + |b_{2,2}|),
\end{equation}
with $c$ a modest constant and $\epsilon_m$ the machine precision.
If there is a $2 \times 2$ block at this position (which means a complex conjugate pair of
eigenvalues in the real case) at this position,
we test if:
\begin{equation}
\label{eq:aed_part6}
| \bm{p}_{s,1}^{A} | + | \bm{p}_{s,2}^{A} | < c \epsilon_{m} \|A_{1:2,1:2}\|_{F}
\quad \text{and} \quad
| \bm{p}_{s,1}^{B} | + | \bm{p}_{s,2}^{B} | < c \epsilon_{m} \|B_{1:2,1:2}\|_{F}.
\end{equation}

If the first eigenvalue is deflatable according to \cref{eq:aed_part5} or
\cref{eq:aed_part6}, the corresponding spike elements in $\bm{p}_{s}^{A}$ 
and $\bm{p}_{s}^{B}$ are set to zero and the next eigenvalue is tested according
to the same criterion.
If the first eigenvalue is not deflatable, another eigenvalue that has not yet been
tested, is swapped to the top-left corner, this also changes the values of the spikes.
Then it is checked if this is deflatable according to \cref{eq:aed_part5} or
\cref{eq:aed_part6}.
This procedure is continued until all deflatable eigenvalues inside the deflation window
are identified.
The swapping of eigenvalues within the deflation window does not change the form of
\cref{eq:aed_part2} but of course $S_{11}$ and $T_{11}$ change, just like the
vector $\bm{p}_s$.
The same strategy is used for AED at the bottom-right side of the pencil.
In pane~III of \Cref{fig:aed} all spike elements that signal a deflation are marked in red.

In the third and last phase, the nonzero spike elements are removed in such a
way that the (block) Hessenberg form is restored. This form is shown
in pane~IV of \Cref{fig:aed}: the larger block in the middle is in block
Hessenberg form and the smaller blocks at the upper-left and bottom-right side
of the pencil are in quasi Schur form.
The block Hessenberg restoration is achieved by a sequence of rotations as follows.
Assume that  after all the reordering the new spike equals $\hat{\bm{p}}_s$ and  we
have the Schur form $(\hat{S}_{11}, \hat{T}_{11})$, where the first $i$ entries of
$\hat{\bm{p}}_s$ are zero. This means that the upper-left $i\times i$ block of
$(\hat{S}_{11}, \hat{T}_{11})$ is deflatable. We then
compute rotations $G_{i+1}, \hdots, G_{w_s-1}$ such that,
$\hat{\bm{p}}_s G_{i+1} \cdots G_{w_s-1}$ becomes a multiple of $\bm{e}_{w_s}^T$.
Applying those transformations to the left of $(\hat{S}_{11}, \hat{T}_{11})$ restores the
block Hessenberg form and removes the spikes.

At the end of the AED phase, we can re-initiate the chasing and use some of the undeflated eigenvalues of
$(\hat{S}_{11}, \hat{T}_{11})$ as poles, to be inserted in the bottom-right in the
upcoming rational multishift QZ step.  In
case
that AED was so succesful that many converged eigenvalues have been deflated, one could
do another step of AED before proceeding to the chase. This technique also used in our final algorithm in \Cref{sec:heuristics}.


\section{The implemented algorithm and heuristics}
\label{sec:numerical}
\label{sec:heuristics}

We have discussed the basic algorithm in \Cref{ssec:manipulating} and AED in
\Cref{sec:deflation}. Here we propose the full algorithm including additional add-ons in
the implementation such as blocking and iterative refinement for inaccurate swaps. Also
the decisions on heuristics such as deflations, block sizes, aggressive deflation windows
and the number of tightly packed poles are presented.

\subsection{Deflation} We always take $B$ in Hessenberg form (see the comments following
\Cref{{defi:esu}}); This simplifies the deflation criteria based on \Cref{def:properness}.

\paragraph{Checking Condition I of \Cref{def:properness}}
The $i$th pole along the subdiagonal is considered deflatable if,
\begin{equation}
\label{eq:defsing}
|a_{i+1,i}| < c \epsilon_m (|a_{i,i}|+|a_{i+1,i+1}|),
\quad \text{and,}  \quad
|b_{i+1,i}| < c \epsilon_m (|b_{i,i}|+|b_{i+1,i+1}|),
\end{equation}
in the case of a single pole.
If the $i$th pole is a double-pole 
we consider it deflatable if either there are small elements in the leading column of the block,
\begin{equation}
\label{eq:defdbl1}
\begin{split}
|a_{i+1,i}| + |a_{i+2,i}| & < c \epsilon_m (|a_{i,i}|+|a_{i+1,i+1}|),
\quad \text{and,}  \\
|b_{i+1,i}| & < c \epsilon_m (|b_{i,i}|+|b_{i+1,i+1}|),
\end{split}
\end{equation}
or there are sufficiently small elements in the trailing row of the block,
\begin{equation}
\label{eq:defdbl2}
\begin{split}
|a_{i+2,i}| + |a_{i+2,i+1}| & < c \epsilon_m (|a_{i+1,i+1}|+|a_{i+2,i+2}|),
\quad \text{and,}  \\
|b_{i+2,i+1}| & < c \epsilon_m (|b_{i+1,i+1}|+|b_{i+2,i+2}|).
\end{split}
\end{equation}

\paragraph{Checking Condition II and III of \Cref{def:properness}}
The first pole block of size $s_1 = 1$ or $2$ can be deflated whenever there exists an $(s_{1}+1){\times}(s_1+1)$
orthogonal matrix $Q$ such that,
\begin{equation}
\label{eq:defstart}
Q^{T}\left(
\begin{bmatrix}
\bm{a}_{1,1}^T \\
A_{2,1}
\end{bmatrix},
\begin{bmatrix}
\bm{b}_{1,1}^T \\
B_{2,1}
\end{bmatrix}
\right)
=
\left(
\begin{bmatrix}
A_{1,1} \\
\bm{0}^T
\end{bmatrix},
\begin{bmatrix}
B_{1,1} \\
\bm{0}^T
\end{bmatrix}
\right)
\end{equation}
Here, the last row is considered numerically zero according to a relative tolerance
similar to \cref{eq:defsing,eq:defdbl1,eq:defdbl2}. The matrix $Q$ is constructed such
to create a desired zero in the column with largest norm; if the pole block is deflatable, this
should create zeros in the other positions as well.
A similar approach is used to check for deflations in the last block row.

\subsection{Accuracy of swapping pole blocks}

The equivalence between swapping eigenvalues in the generalized Schur form and swapping
poles allows us to use the \texttt{LAPACK} routine \texttt{DTGEXC}. However, some modifications can greatly increase the effectiveness of this routine.

When one of the blocks to be swapped is of size 1, the method described in
\cite{VanDooren1981} can be used. We note that a modest modification proposed by Camps et
al.\ \cite{CaVaWaMa20} of Van Dooren's
method \cite{VanDooren1981} improves the theoretical bound on the backward error of the
swap.  
The computed matrices $\hat{Q}$,$\hat{Z}$,$\hat{A}$,$\hat{B}$ are proven to satisfy
\begin{equation}
   \hat{Q}^*(A+E_A,B+E_B)\hat{Z} = (\hat{A},\hat{B}),
\end{equation}
with $\|E_A\|_2 \le c\epsilon\|A\|_2$ and $\|E_B\|_2 \le c\epsilon\|B\|_2$
\cite{CampsThesis}. When one of the blocks is of size two, numerical evidence suggest
that the same strategy can be applied; a
theoretical backward error analysis is lacking, however.

If both of the blocks are of size two, the transformations are computed according to
\cite{Kagstrom1993,Kagstrom1996} relying on solving the Sylvester equation. However, this
method is not norm-wise backward stable. It leads to occasional non-negligible
off-diagonal blocks when an ill-conditioned $2{\times}2$ block close to convergence is
involved. In this case, a step of iterative refinement takes place \cite{Camps2019} to decrease the
norm of the off-diagonal block. Our numerical experiments indicated that iterative
refinement is required in about $5\%$ of all $2{\times}2$ with $2{\times}2$ swaps during a
typical RQZ iteration. In \Cref{sec:numerics} the numerics illustrate that these swaps lead
to a backward stable algorithm.

\subsection{Blocking and packing poles}

To avoid shift blurring, we avoid  large  multiplicities. However, only chasing a few
 at a time does not perform well on modern computer architectures.  Braman, Byers,
and Mathias \cite{Braman2002} proposed using a train of shifts.  Instead of introducing a shift and chasing it all the way to the bottom of the
pencil, a shift is chased just far enough to make room for another shift to be
introduced. As a result, we end up with a train of shifts that can be chased to the bottom
efficiently. The same idea was applied to the QZ algorithm by K{\aa}gstr{\"o}m and
Kressner \cite{Kagstrom2007}. 

We also make use of blocking, which tries to make optimal use of the computer architecture by enhancing
cache efficiency as a result of reducing the cost of moving data in and out of memory. The
direct updates are therefore limited to a moving computational window
(a block on the diagonal of the pencil) in which the train of shifts is chased from the
top-left corner of the window to the bottom-right. During this chase all executed
transformations are accumulated and the rest of the pencil is updated at once via
matrix-matrix multiplications.  To execute these
matrix-matrix multiplications we rely on highly optimized BLAS implementations
\cite{blackford2002updated,wang2014intel}.

The computational efficiency will benefit from getting as many shifts as possible in a
single window. Camps et al.\ \cite{CaVaWaMa20} illustrated that the
rational QZ method allows to pack shift-poles
optimally in a straightforward way. In the bulge chasing setting one can easily pack shifts tightly \cite{Braman2002}, but more
advanced techniques are required to pack them optimally \cite{KaKrLa14}.

The sliding window size $n_w=n_s+k$, where $n_s$ stands for the number of optimally
packed shift(-pole)s that are present in a window and $k$ denotes the number of positions that
each shift-pole can move down.
To select the window size Karlsson et al.\ \cite{KaKrLa14} chose to minimize
the amount of flops during a full sweep. This selection of the window size is optimal
in a sense, but it neglects other factors that can be important in practice. While the
computations inside the window are usually negligable when purely counting flops, they are in essence computationally more demanding
than the highly optimized matrix-matrix multiplications (BLAS) to update the
off-diagonal part of the pencil. As a result the calculated optimum will be an
overestimate  when using a
large number of shifts. Secondly, most BLAS implementations, and especially parallel
variants of BLAS, become more efficient when doing larger multiplications. This indicates
that the calculated optimum will be an underestimate when using a small number of shifts.

To account for the overestimation we propose to select a blocksize $n_w= n_s+ k$ that minimizes the function
\begin{equation}\label{eq:blocksizecost}
\text{cost}(k) = \frac{2cn(k+n_s)^2 + 4n_s k(k+n_s)}{k},
\end{equation}
where $c$ is the inverse of the flop rate of the BLAS calls relative to the window update. This minimum is attained for
\begin{equation}
k = n_s (1 + \frac{2n_s}{cn})^{-\frac{1}{2}}.
\end{equation}
For very large values of $c$ or $n$, this will tend towards $n_s$, the optimum obtained
in \cite{KaKrLa14}. This value will still be an underestimate for small
values of $n_s$. Therefore we impose a minimal blocksize based on the size of the
pencil. In our implementation, $c$ is set to $0.1$.

\subsection{Aggressive deflation windows and number of shifts}

The parameters $n_s$, which stands for the number of shifts, and $w_e$ and $w_s$, which are
the windows for aggressive deflation at the top and at the bottom, are harder to
select. These parameters have a significant impact on both convergence speed and the
overall execution time.
We have selected the parameters empirically. A summary is found in
Table~\ref{tab:settings}. The first column lists the size of the pencil.  The second
column lists the batch size $n_s$ of shifts that are handled in one iteration.  The third
column lists the window size $w_e$ for aggressive early deflation at the bottom-right side
of the pencil. Finally, the fourth column lists the window size $w_s$ for aggressive early
deflation at the upper-left side of the pencil. If, in the AED window $8\%$, w.r.t.\ $w_e$
or $w_s$, of the
eigenvalues are found, we do another step of AED and keep repeating this procedure before
starting the RQZ sweep.

\begin{table}[htp]
	\centering
	\caption{Settings: $n$ problem size, $m$ step multiplicity, $w_e$ AED window size at the bottom-right side of the pencil, $w_s$
		AED window size at the upper-left side of the pencil.}
	\begin{tabular}{l|c|c|c|c}
		$n$ & $n_s$ & $w_e$ & $w_s$\\
		\hline
		$\left[1;80\right[$ & $1$---$2$ & $1$---$2$ &$1$---$2$\\
		$\left[80;150\right[$ & $4$ & $8$ &$4$\\
		$\left[150;590\right[$ & $32$  & $48$ &$32$\\
		$\left[590;3000\right[$ & $40$  & $96$ &$40$\\
		$\left[3000;\infty\right[$ & $64$  & $96$ &$64$\\
	\end{tabular}
	\label{tab:settings}
\end{table}

Due to the nature of the algorithm the poles move slower to the top than shift-poles to
the bottom, hence $w_s \leq w_e$. Alternative versions of swapping
   algorithms where we get equally fast convergence to the top are proposed by Camps et
   al. \cite{CaVaWaMa20} and use bidirectional chasing.






\subsection{Full algorithm}
A  double-shift real rational QZ step (DRRQZ) step with all add-ons proceeds as follows. The heuristics were discussed
in the previous sections.

\begin{enumerate}[I.]
	
\item Check for interior deflations to select the active part of the matrix. 
		\item AED at the top.
          \begin{enumerate}
          \item Check for deflations at the upper-left side of the pencil using AED. If
            sufficient eigenvalues are found redo the AED.
          \item Take $n_s$ undeflated eigenvalues to be
            introduced as poles at the end of the pencil in III(c).
          \end{enumerate}
	\item AED at the bottom.
          \begin{enumerate}
          \item Check for deflations at the bottom-right side of the pencil using AED, and
            repeat if necessary.
          \item Store $n_s$ undeflated eigenvalues to be introduced as shift-poles.

          \item The undeflated eigenvalues from II(b) are introduced as poles at the
            bottom and moved to the top of the window\footnote{They must be moved to
              the top of the window, otherwise the next AED step removes them               again.}.
          \end{enumerate}
	
	\item Initiate the sweep by introducing $n_s$ shift-poles from step III(b).
	The involved transformations are accumulated and the pencil is updated by
	level~3 BLAS matrix-matrix multiplication.
	
	\item Chase the batch of $n_s$ shift-poles to the last $n_s$ positions on the subdiagonal
	of the block Hessenberg pencil. The chasing is performed by repeatedly swapping
	the $n_s$ shift-poles with the next $k$ poles thereby using blocking. 
	
\end{enumerate}


\section{Numerical experiments}
\label{sec:numerics}

In this section we will demonstrate the accuracy and efficiency of our implementation,
which is a double-shift and double-pole version for handling real pencils, and which sticks to real
arithmetic. We name our algorithm \texttt{DRRQZ}. We will show that it is much faster than our
complex single shift rational QZ algorithm, named \texttt{ZRQZ}, and we will compare with the QZ
algorithm of \texttt{LAPACK} and \texttt{PDHGEQZ} \cite{adlerborn2015pdhgeqz}. \texttt{PDHGEQZ} is a library designed to be used on large parallel machines on very large problems. For a fair comparison with our code, \texttt{PDHGEQZ} was not run in parallel, but linked to the same parallel BLAS implementation. Even run serially, \texttt{PDHGEQZ} performs very well \cite{adlerborn2014parallel}.

The QZ algorithm based on bulge chasing and the \texttt{DRRQZ} implementation have some essential
differences. 
\begin{itemize}
\item In the QZ case we always swap (implemented as a bulge chase) the shift-pole block
  which is of size $2\times 2$ with a $1\times 1$ pole block, since the poles moving up
  always equal $\infty$. In the \texttt{DRRQZ} we can also encounter $2 \times 2$ pole blocks to be
  swapped with the shift-pole block, so some additional branches in the code are required. 
\item The swapping procedure is implemented completely differently. The QZ setting allows
  us to
  execute all operations on one side of the pencil, before handling the other side, in the
  \texttt{DRRQZ} setting this is not true, so we pay a price in cache efficiency. 
\end{itemize}
 There are also some particularities of the codes we will compare with. 
  \begin{itemize}
    \item \texttt{PDHGEQZ} has a special built in feature to deflate infinite eigenvalues, that
      appear as zeroes on the diagonal of $B$, while executing the algorithm; this feature
      is unexisting in  the rational QZ algorithm and appears to be hard to implement
      generic Hessenberg pencils.
    \item  The \texttt{DRRQZ} setting allows us to trivially deal with optimally packed
      shift-poles \cite{CaVaWaMa20}. They are more tightly packed than the ones in \texttt{PDHGEQZ} and \texttt{LAPACK}.
\item \texttt{LAPACK} doesn't use AED, \texttt{PHDGEQZ} and \texttt{DRRQZ} do, \texttt{DRRQZ} also executes AED on the top of the pencil
\end{itemize}

The numerical tests have been performed on an
Intel Xeon E5-2697 v3 CPU with $14$ cores and $128$GB of RAM.
Our \texttt{DRRQZ} implementation
with aggressive deflation is compiled with \emph{gfortran} version 7.4.0 using compilation
flag \texttt{-Ofast}. The code is linked with release 88 of \texttt{MKL} and the link to the git
repository is available at \url{numa.cs.kuleuven.be/software/rqz} and
\url{github.com/thijssteel/multishift-multipole-rqz}.

\subsection{Simple test problems}
\label{ssec:kressnerlapack}

For our first numerical experiment, we will compare the implementations using two different pencils.
The first pencil is in Hessenberg, triangular form, with the nonzero entries drawn from a
uniform distribution between 0 and 1. This is the Hessrand2 test problem from \cite{adlerborn2014parallel}.
Because of the random nature of this pencils, the results are
averaged over 10 runs\footnote{Except for \texttt{LAPACK}, there we considered a single run due to high
  computational cost.}.
AED is so effective on this pencil that barely any sweeps are required.
Most of the computation time is therefore spent in AED.
The second pencil is also in Hessenberg, triangular form, with the nonzero entries given
by $A_{i,j} = i+j$ and $B_{i,j} = 2i+3j$. This pencil was constructed explicitly such that 
AED is less effective. As a consequence we  expect the sweep to have a bigger role in the execution time.

\begin{figure}[htp]
\label{fig:timereal}
\centering
\resizebox{\textwidth}{!}{%
\begin{tikzpicture}

\pgfplotstableread[row sep=\\,col sep=&]{
  sz & tqz & tqzm & tqzkressner & trqzm \\
  1000 & 7.1884552 &  1.085662682 &  1.471481982 & 1.0 \\
  1414 & 25.9454376 &  1.570285445 & 2.369853147 & 1.200966664\\
  2000 & 93.8453857 &  2.423607355 & 4.109919943 & 1.881835564\\
  2828 & 276.3524980 &  3.738139845 & 7.753459501 & 2.824150818\\
  4000 & 771.3608919 & 6.013744927 & 15.33716692 & 5.453504109\\
  5657 & 2226.5918753 & 9.979186127 & 27.06582937 & 9.445276791\\
}\mydata

\begin{axis}[
    xticklabels={1000,1414,2000,2828,4000,5657,8000},
    ymode=log,
    xmode=log,
    xtick=data,
    bar width=2mm,
    x tick label style={rotate=45,anchor=east},
    xlabel={$n$},
    ylabel={$t(s)$},
    my ybar legend,
    legend pos=north west,
    legend style={font=\small},
    width=0.50\linewidth,
    at={(0\linewidth,0)},
]
	\addplot[ybar,bar shift=-3mm,fill=black!10,postaction={pattern=dots},opacity=0.6,legend] table[x=sz,y=trqzm]{\mydata};
	\addplot[ybar,bar shift=-1mm,fill=black!50,postaction={pattern=north east lines},opacity=0.6,legend] table[x=sz,y=tqzkressner]{\mydata};
	\addplot[ybar,bar shift=1mm,fill=black!50,opacity=0.6,legend] table[x=sz,y=tqz]{\mydata};
%
%

	 \legend{\texttt{DRRQZ},\texttt{PDHGEQZ},\texttt{LAPACK}}
\end{axis}

\pgfplotstableread[row sep=\\,col sep=&]{
	sz & tqz & tqzm & tqzkressner & trqzm \\
	1000 & 9.1470463 &  1.9312363 &  4.0088403300615028 & 2.3975728\\
	1414 & 30.6765495 &  3.1781807 & 5.8239643430570140 & 3.9779007\\
	2000 & 103.8768633 &  5.7166858 & 12.459904983988963 & 6.9142253\\
	2828 & 342.5666382 &  11.3852339 & 30.210247943992727 & 14.0006025\\
	4000 & 925.2474879 &  24.5866870 & 57.969269493012689 & 32.3396838\\
	5657 & 2898.8343897 & 56.1577301 & 123.86925438593607 & 66.2084222\\
	8000 & 8192.5145759 & 146.4358318 & 438.78187234397046 & 167.0603150\\
}\mydata

\begin{axis}[
xticklabels={1000,1414,2000,2828,4000,5657,8000},
ymode=log,
xmode=log,
xtick=data,
bar width=2mm,
x tick label style={rotate=45,anchor=east},
xlabel={$n$},
my ybar legend,
legend pos=north west,
legend style={font=\small},
width=0.50\linewidth,
at={(0.5\linewidth,0)},
]
\addplot[ybar,bar shift=-3mm,fill=black!10,postaction={pattern=dots},opacity=0.6,legend] table[x=sz,y=trqzm]{\mydata};
\addplot[ybar,bar shift=-1mm,fill=black!50,postaction={pattern=north east lines},opacity=0.6,legend] table[x=sz,y=tqzkressner]{\mydata};
\addplot[ybar,bar shift=1mm,fill=black!50,opacity=0.6,legend] table[x=sz,y=tqz]{\mydata};

%
%

\legend{\texttt{DRRQZ},\texttt{PDHGEQZ},\texttt{LAPACK}}
\end{axis}

\end{tikzpicture}
}
\caption{Execution time of \texttt{DHGEQZ} of \texttt{LAPACK}, \texttt{libRQZ} and \texttt{PDHGEQZ} on randomly generated real-valued matrix pencils (\emph{left}) and the `$i+j$' pencil (\emph{right}).}
\end{figure}

\Cref{fig:timereal} shows the execution time of \texttt{DRRQZ},
\texttt{DHGEQZ} and \texttt{PDHGEQZ} for problems of size $1000$ up to $8000$ on a loglog scale.
The left part shows the results for the randomly generated pencil. \texttt{libRQZ} is
faster than \texttt{PDHGEQZ} and both methods show large speedups over \texttt{LAPACK}
(which does not use AED). The right part shows the results for the second `$i+j$'
pencil. The larger amount of sweeps results in a smaller speedup over both competitors.

\begin{figure}[htp]
	\label{fig:bwerealcomplex}
	\centering
	\resizebox{\textwidth}{!}{%
		\begin{tikzpicture}

\pgfplotstableread[row sep=\\,col sep=&]{
	n & eRQZ & eKressner & eLAPACK \\
	1000 & 2.5714143E-15 &  4.5714045984879848E-015 & 7.9596449E-15 \\
	1414 & 2.3806355E-15 &  5.5271398026622763E-015 & 9.3180672E-15  \\
	2000 & 2.0619174E-15 &  6.0984299572949447E-015 & 1.0860809E-14\\
	2828 & 1.9956699E-15 &  6.3906114727257271E-015 & 1.2511191E-14\\
	4000 & 2.1599589E-15 &  7.5249656379195053E-015 & 1.4141155E-14  \\
	5657 & 1.9153082E-15 &  8.6699884513419090E-015 & 1.6539671E-14\\
}\myrandomdata

\pgfplotstableread[row sep=\\,col sep=&]{
	n & eRQZ & eKressner & eLAPACK \\
	1000 & 5.1709481E-15 &  5.3066257138206025E-015 & 7.1791612E-15 \\
	1414 & 6.6354981E-15 &  5.8231295483828774E-015 & 8.6841928E-15  \\
	2000 & 7.2721973E-15 &  7.4712305743333309E-015 & 9.9210641E-15 \\
	2828 & 8.3719760E-15 &  9.0686153704971901E-015 & 1.1318501E-14\\
	4000 & 8.8067541E-15 &  9.8994357970937368E-015 & 1.3126660E-14  \\
	5657 & 9.6262520E-15 &  1.2182817664622218E-014 & 1.5830244E-14 \\
}\myijdata

\pgfplotsset{
  log x ticks with fixed point/.style={
      xticklabel={
        \pgfkeys{/pgf/fpu=true}
        \pgfmathparse{exp(\tick)}%
        \pgfmathprintnumber[fixed relative, precision=3]{\pgfmathresult}
        \pgfkeys{/pgf/fpu=false}
      }
  }}
  
\begin{axis}[%
at={(0cm,0cm)},
width=4.5cm,
height=4.5cm,
title={Random},
ylabel shift = -0.05 cm,
xlabel shift = -0.15 cm,
scale only axis,
xmode=log,
xmin=761,
xmax=8000,
log x ticks with fixed point,
xtick=data,
xticklabels={1000,1414,2000,2828,4000,5657},
xlabel style={font=\color{white!15!black}},
xlabel={$n$},
x tick label style={rotate=45,anchor=east},
ymode=log,
ymin=0.9e-15,
ymax=1e-13,
yminorticks=true,
ylabel style={font=\color{white!15!black}},
ylabel={relative backward error},
axis background/.style={fill=white},
every axis plot/.append style={thick},
grid=both,
grid style={line width=.1pt, draw=gray!10},
major grid style={line width=.2pt,draw=gray!50},
]

\addplot [color=black, loosely dashed, mark=*, mark options={solid, scale=1.5, fill=white}]
  table[x=n,y=eRQZ]{\myrandomdata};

\addplot [color=black, loosely dashed, mark=triangle*, mark options={solid, scale=1.5, fill=gray}]
  table[x=n,y=eKressner]{\myrandomdata};

\addplot [color=black, loosely dashed, mark=square*, mark options={solid, scale=1.5, fill=gray}]
  table[x=n,y=eLAPACK]{\myrandomdata};

\end{axis}

\begin{axis}[%
at={(6cm,0cm)},
width=4.5cm,
height=4.5cm,
title={$i+j$},
ylabel shift = -0.15cm,
xlabel shift = -0.15cm,
scale only axis,
xmode=log,
xmin=761,
xmax=8000,
xminorticks=true,
log x ticks with fixed point,
xtick=data,
xticklabels={1000,1414,2000,2828,4000,5657},
xlabel style={font=\color{white!15!black}},
x tick label style={rotate=45,anchor=east},
xlabel={$n$},
ymode=log,
ymin=0.9e-15,
ymax=1e-13,
ylabel style={font=\color{white!15!black}},
ylabel={relative backward error},
axis background/.style={fill=white},
every axis plot/.append style={thick},
grid=both,
grid style={line width=.1pt, draw=gray!10},
major grid style={line width=.2pt,draw=gray!50},
]

\addplot [color=black, loosely dashed, mark=*, mark options={solid, scale=1.5, fill=white}]
  table[x=n,y=eRQZ]{\myijdata};

\addplot [color=black, loosely dashed, mark=triangle*, mark options={solid, scale=1.5, fill=gray}]
  table[x=n,y=eKressner]{\myijdata};

\addplot [color=black, loosely dashed, mark=square*, mark options={solid, scale=1.5, fill=gray}]
  table[x=n,y=eLAPACK]{\myijdata};

\end{axis}

\end{tikzpicture}%
	}
	\caption{Maximum of the relative backward error on $A$ and $B$ of Schur decomposition computed with \texttt{LAPACK} (\emph{squares}), \texttt{PDHGEQZ} (\emph{triangles})
		and \texttt{DRRQZ} from \texttt{libRQZ} (\emph{circles}). Both
		for randomly generated pencils (\emph{left}) and the `$i+j$' pencil (\emph{right}).}
\end{figure}

\Cref{fig:bwerealcomplex} shows the relative backward errors,
$$
\|S - Q^T A Z\|_{F} / \|A\|_{F},
\quad \text{and,} \quad
\|T - Q^T B Z\|_{F} / \|B\|_{F},
$$
on the generalized real Schur decompositions obtained with \texttt{libRQZ}, \texttt{PDHGEQZ} and \texttt{DHGEQZ}.
We observe that the relative backward errors of \texttt{libRQZ} are almost always the smallest.

\subsection{Complex code and real double-pole code}
We compare the \texttt{DRRQZ} code with the RQZ code, the latter will just ignore the fact that the
matrix is real, and use complex shift-poles to find the complex conjugate pairs of
eigenvalues. Complex arithmetic is between 2 and 6 times more expensive than real
arithmetic. However, the swapping procedure is significantly simpler because all the
pole blocks are of size 1 so we do not expect the real code to achieve speedups as large
as 6. The numerical experiments reveal that around $5000$, \texttt{DRRQZ} becomes twice as fast.

We solved the `$i+j$' pencil  using the real and complex RQZ code. The results are
displayed in \Cref{tab:numexp_realvscomplex}. The real code is always faster with speedups
of up to $2.4$ for the larger pencils.

\begin{table}[htp]
    \centering
    \caption{Execution time and relative backward error of real and complex RQZ code on the `$i+j$` pencil.}
    \begin{adjustbox}{max width=\textwidth}
        \begin{tabular}{c|c|c|c|c|c}
            \multicolumn{1}{c|}{}  & \multicolumn{2}{c|}{\texttt{RQZ}} &
            \multicolumn{2}{c|}{\texttt{DRRQZ}} & \multicolumn{1}{c}{}\\
          $n$ & $t(s)$ & max$(\Delta A,\Delta B)$ & $t(s)$ & max$(\Delta A,\Delta B)$ & Speedup\\
            \hline
            $1000$ & $2.56$ & $5.48 \cdot 10^{-15}$ & $2.12$ & $5.17 \cdot 10^{-15}$ & 1.21\\
            $1414$ & $4.96$ & $5.93 \cdot 10^{-15}$ & $3.64$ & $6.64 \cdot 10^{-15}$ & 1.36\\
            $2000$ & $10.68$ & $6.35 \cdot 10^{-15}$ & $7.03$ & $7.27 \cdot 10^{-15}$ & 1.52\\
            $2828$ & $24.05$ & $6.9 \cdot 10^{-15}$ & $14.19$ & $8.37 \cdot 10^{-15}$ & 1.69\\
            $4000$ & $57.59$ & $7.7 \cdot 10^{-15}$ & $31.24$ & $8.57 \cdot 10^{-15}$ & 1.84\\
            $5657$ & $144.27$ & $8.2 \cdot 10^{-15}$ & $65.17$ & $9.62 \cdot 10^{-15}$ & 2.21\\
            $8000$ & $375.75$ & $9.1 \cdot 10^{-15}$ & $157.11$ & $9.89 \cdot 10^{-15}$ & 2.39\\
        \end{tabular}
    \end{adjustbox}
    \label{tab:numexp_realvscomplex}
\end{table}



\subsection{Problems from applications}

In this section we test \texttt{libRQZ} on five pencils originating from
applications.
We study the \emph{cavity} and \emph{obstacle flow} pencils generated
with IFISS \cite{Elman07,Elman14}. The same pencils were studied in
our initial paper on the RQZ method \cite{camps2019rational}.
Besides these pencils, we have selected two pencils from
Matrix market \cite{Boisvert1997} originating from the MHD collection
and the \emph{rail} pencil from the Oberwolfach benchmark collection
\cite{Oberwolfach}.

Before being able to run the eigenvalue solvers. The pencils need to be reduced to
Hessenberg, triangular form. At this moment there is no competitive direct method
to reduce a pencil to Hessenberg, Hessenberg form, with a prescribed set of poles. The lack of such a reduction is an
important aspect of future research.  We do note, however, that the RQZ method accepts a
more general input form, and for instance pencils resulting from rational Krylov methods
\cite{BeGu15} would no longer need a reduction step.  

Our code uses a basic input format, whereas \texttt{PDHGEQZ} makes use of
\texttt{ScaLAPACK}'s data format. As a consequence our current implementation can not make direct
use of the superior built-in reduction to Hessenberg, triangular form in
\texttt{PDHGEQZ}. Instead we have to rely on \texttt{DGEQRFP} and \texttt{DGGHD3} of
\texttt{LAPACK}.


The results of the numerical tests are summarized in \Cref{tab:numexp}.  The table lists
the execution time of both the reduction and iterative steps and the maximum of the
relative backward errors of $A$ and $B$ for the generalized real Schur form.
\texttt{libRQZ} is better in terms of execution time of the iterative part and the
backward error.  We stress, that for both methods, the reduction to Hessenberg, triangular
form requires much more computation time than the iterative part. Work and research is required to
either adapt our code to operate on the \texttt{ScaLAPACK} data format, or to devise a fast
reduction to Hessenberg, Hessenberg form.

\begin{table}[htp]
    \centering
    \caption{Execution times of the reduction ($t_{\text{HT}}(s)$) and iterative ($t_{\text{IT}}(s)$) step and maximum relative backward error on the generalized quasi Schur form computed with \texttt{DRRQZ} and \texttt{PDHGEQZ}
    for pencils originating from applications.}
    \begin{adjustbox}{max width=\textwidth}
    \begin{tabular}{l|c|c|c|c|c|c|c}
    \multicolumn{2}{c|}{}  & \multicolumn{3}{c|}{\texttt{DRRQZ}} & \multicolumn{3}{c}{\texttt{PDHGEQZ}} \\
    Problem & $n$ & $t_{\text{HT}}(s)$ & $t_{\text{IT}}(s)$ & max$(\Delta A,\Delta B)$ & $t_{\text{HT}}(s)$ & $t_{\text{IT}}(s)$ & max$(\Delta A,\Delta B)$ \\
    \hline
    Obstacle Flow & $2488$ & $104.95$ & $5.3$ & $8.2 \cdot 10^{-15}$ & $24.4$ & $7.3$ & $1.0 \cdot 10^{-14}$\\
    Cavity Flow & $2467$ & $105.4$ & $4.4$ & $6.3 \cdot 10^{-15}$ & $26.5$ & $6.3$ & $9.0 \cdot 10^{-15}$\\
    MHD3200 & $3200$ & $230.1$ & $5.0$ & $2.9 \cdot 10^{-15}$ & $54.6$ & $9.1$ & $9.2 \cdot 10^{-15}$\\
    MHD4800 & $4800$ & $915.1$ & $15.2$ & $4.0 \cdot 10^{-15}$ & $184.3$ & $22.7$ & $1.5 \cdot 10^{-14}$\\
    RAIL & $5177$ & $1215.1$ & $28.5$ & $1.1 \cdot 10^{-15}$ & $258.7$ & $47.3$ & $3.5 \cdot 10^{-14}$
    \end{tabular}
    \end{adjustbox}
    \label{tab:numexp}
\end{table}





\section{Theoretical analysis of the rational QZ algorithm}
\label{sec:theory}
In this section we will prove that the rational QZ algorithm is equivalent to subspace iteration driven
by a rational function. We will rely heavily on the connection with rational Krylov. The
proof and analysis proceeds similarly to the work of Watkins \cite{b333}. A more detailed
analysis focussing of the single shift rational QZ case  can be found in Camps et al.\ \cite{camps2019rational} and the references therein.

\subsection{Rational Krylov and block Hessenberg pencils}
\label{ssec:RK}

We study the structure of \emph{rational Krylov subspaces}
generated by proper block Hessenberg pencils.
These results prove the correctness of the \emph{pole introduction}
operation (see \Cref{ssec:manipulating}) and
\emph{essential uniqueness} 
of a multishift, multipole RQZ step.

The elementary rational matrices $M$ and $N$ \eqref{eq:elemRational} are used to construct
rational Krylov matrices generated by a regular matrix pencil, a starting vector,
and a tuple of poles.
They  satisfy some basic properties we need later on, for proofs see \cite[Lemma 5.3]{camps2019rational}).
The inverse $M(\varrho,\xi)^{-1}$ is defined if $\varrho \notin \Lambda$ and is
equal to $M(\xi,\varrho)$.
They are commutative,
$M(\varrho_1,\xi_1) M(\varrho_2,\xi_2) = M(\varrho_2,\xi_2) M(\varrho_1,\xi_1)$, and
they can be merged together,
$M(\varrho,\xi_1) M(\xi_1,\xi_2) = M(\varrho,\xi_2)$,
if a pole and shift are equal.
Analogous results hold for $N(\varrho,\xi)$.


\begin{definition}[Rational Krylov matrices]
\label{def:ratkrylmat}
Let $A,B \in \FF^{n{\times}n}$ form a regular matrix pencil,
$\bm{v} \in \FF^n$ a nonzero vector, $k{\leq}n$,
$\Xi = (\xi_1, \hdots, \xi_{k-1})$
a tuple of poles distinct from the eigenvalues, and
$P = (\varrho_1, \hdots, \varrho_{k-1}) \subset \bar{\CC}$ 
a tuple of shifts distinct from the poles.
The corresponding rational Krylov matrices are defined as: 
\begin{equation}
\label{eq:ratkrylmat}
\resizebox{.89\hsize}{!}{$
\begin{split}
K^{\text{rat}}_{k}(A,B,\bm{v}, \Xi, P) & = 
\left[ \bm{v}, 
M(\varrho_1,\xi_1) \bm{v}, 
M(\varrho_2,\xi_2) M(\varrho_1,\xi_1) \bm{v},
\, \hdots, 
\left( \prod_{i=1}^{k-1}M(\varrho_i,\xi_i) \right) \bm{v} \right], \\
L^{\text{rat}}_{k}(A,B,\bm{v}, \Xi, P) & = 
\left[ \bm{v}, 
N(\varrho_1,\xi_1)  \bm{v}, 
N(\varrho_2,\xi_2) N(\varrho_1,\xi_1) \bm{v},
\, \hdots, 
\left( \prod_{i=1}^{k-1} N(\varrho_i,\xi_i) \right) \bm{v} \right].
\end{split}
$}
\end{equation}
\end{definition}
The column spaces of these matrices span the
\emph{rational Krylov subspaces}.

\begin{definition}[Rational Krylov subspaces]
\label{def:rksubspace}
The rational Krylov subspaces $\mathcal{K}_{k}^{\text{rat}}$ and
$\mathcal{L}_{k}^{\text{rat}}$, $k{\leq}n$,
associated with the ${n{\times}n}$ regular pencil $(A,B)$, 
a nonzero vector $\bm{v} \in \FF^n$, and pole tuple
$\Xi = (\xi_1, \hdots, \xi_{k-1})$ distinct from the
eigenvalues, are defined as,
\begin{equation}
\label{eq:rksubspace_definition}
\begin{split}
\mathcal{K}_{k}^{\text{rat}}(A,B,\bm{v},\Xi) 
& \equiv \mathcal{R}(K_{k}^{\text{rat}}(A,B,\bm{v}, \Xi, \Rho))
= \prod_{i=1}^{k-1} M(\hat{\varrho},\xi_i) \cdot
\mathcal{K}_k(M(\check{\varrho},\hat{\varrho}),\bm{v})
, \\
\mathcal{L}_{k}^{\text{rat}}(A,B,\bm{v},\Xi) 
& \equiv \mathcal{R}(L_{k}^{\text{rat}}(A,B,\bm{v}, \Xi, \Rho))
= \prod_{i=1}^{k-1} N(\hat{\varrho},\xi_i) \cdot
\mathcal{K}_k(N(\check{\varrho},\hat{\varrho}),\bm{v})
,
\end{split}
\end{equation}
where the shift tuple $\Rho$ is freely chosen
in agreement with \Cref{def:ratkrylmat},
$\hat{\varrho}$ is a shift different from the eigenvalues and poles, 
and $\check{\varrho}$ is an alternative shift different from $\hat{\varrho}$.
\end{definition}

The first equality in \cref{eq:rksubspace_definition} defines
the rational Krylov subspaces, the second equality
repeats \cite[Lemma 5.6.II]{camps2019rational}.
This result shows that rational Krylov subspaces are \emph{shift invariant}
as they are independent of the choice of shifts $\Rho$.

The following theorem generalizes \cite[Theorem 5.6]{camps2019rational}
and shows that the rational Krylov subspaces $\mathcal{K}^{\text{rat}}$ and
$\mathcal{L}^{\text{rat}}$ stemming from proper block Hessenberg pencils link closely to subspaces built from the standard basis
vectors. This theorem is essential in proving the link with subspace iteration in \Cref{sec:convergence}.
\begin{theorem}
\label{thm:blockHessspaces}
Given an $n{\times}n$ proper block Hessenberg pencil $(A,B)$ with partition 
$\bm{s} = (s_1$, $\hdots$, $s_m)$,
poles $\Xi = (\Xi^1, \hdots, \Xi^m)$ with 
$\Xi^i = \lbrace \xi^{i}_{1}, \hdots \xi^{i}_{s_i} \rbrace$ that are all different from
the eigenvalues.
Then for $j=0,1,\hdots,m$,
\begin{equation}
\label{eq:Kratspan}
\mathcal{K}^{\text{rat}}_{s_1+\cdots s_{j}+1}
(A,B,\bm{e}_1, (\Xi^1, \hdots, \Xi^j)) 
= \mathcal{E}_{s_1+\cdots+s_{j}+1}.
\end{equation}
While for $j=1,\hdots,m$,
\begin{equation}
\label{eq:Lratspan}
\mathcal{L}^{\text{rat}}_{s_1+\cdots+s_{j}}
(A,B,\bm{z}_1, (\breve{\Xi}^1, \Xi^2, \hdots, \Xi^j)) 
= \mathcal{E}_{s_1+\cdots+s_{j}},
\end{equation}
with $\breve{\Xi}^1 = \lbrace \xi^1_1, \hdots, \xi^1_{s_{1}-1} \rbrace$,
and $\bm{z}_1$ the right eigenvector of the pole pencil corresponding to
pole $\xi^1_{s_1}$. Here $\xi^1_{s_1}$ can be any of the poles in $\Xi^1$.
\end{theorem}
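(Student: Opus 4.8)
The plan is to reduce the block statement to the known Hessenberg case \cite[Theorem 5.6]{Camps2018} using the block-to-Hessenberg reduction of \Cref{lemma:blockHessToHess}, together with the shift-invariance of rational Krylov subspaces recorded in \cref{eq:rksubspace_definition}. First I would apply \Cref{lemma:blockHessToHess} to obtain unitary block-diagonal $Q=\diag(1,Q_1,\hdots,Q_m)$ and $Z=\diag(Z_1,\hdots,Z_m,1)$ so that $(\hat A,\hat B)=Q^*(A,B)Z$ is a proper Hessenberg pair whose poles are $(\pi_1(\Xi^1),\hdots,\pi_m(\Xi^m))$, i.e. within each block $\bm s$ the poles are permuted but the blocks stay mutually ordered. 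Since the elementary rational matrices in \cref{eq:elemRational} are built from $(A,B)$ and, by the basic properties listed after \cref{eq:elemRational}, are commutative, the product $\prod_i M(\varrho_i,\xi_i)$ is insensitive to the internal ordering of the poles within a block; hence the rational Krylov subspace $\mathcal K^{\text{rat}}_{s^c_j+1}(A,B,\bm e_1,(\Xi^1,\hdots,\Xi^j))$ equals $\mathcal K^{\text{rat}}_{s^c_j+1}(A,B,\bm e_1,(\pi_1(\Xi^1),\hdots,\pi_j(\Xi^j)))$ regardless of which permutations $\pi_i$ arise from the reduction. This is the observation that makes the reduction usable despite the order ambiguity.

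Next I would track how the equivalence transformation acts on the rational Krylov matrices. From \cref{eq:elemRational}, $M(\varrho,\xi)$ for the transformed pair $(\hat A,\hat B)$ equals $Z^{-1}M(\varrho,\xi)Z$ (and $N(\varrho,\xi)$ transforms as $Z^{-1}N(\varrho,\xi)Z$ as well, since $\LinOpInv{\alpha}{\beta}$ picks up $Z^{-1}\cdots Q$ and $\LinOp{\mu}{\nu}$ picks up $Q^*\cdots Z$, so the $Q$-factors cancel in $N$ just as in $M$). Therefore
\begin{equation*}
\mathcal K^{\text{rat}}_{k}(\hat A,\hat B,Z^*\bm v,\hat\Xi)=Z^*\,\mathcal K^{\text{rat}}_{k}(A,B,\bm v,\hat\Xi),
\end{equation*}
and similarly for $\mathcal L^{\text{rat}}$. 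For \cref{eq:Kratspan} the starting vector is $\bm e_1$, and because $Z$ has the block-diagonal form $\diag(Z_1,\hdots,Z_m,1)$ with $Z_1$ of size $s_1\times s_1$ and $s_1\geq 1$, we get $Z^*\bm e_1=Z_1^*\bm e_1\in\mathcal E_{s_1}\subseteq\mathcal E_{s^c_j+1}$; more importantly $Z$ maps $\mathcal E_{s^c_j+1}$ onto itself for every $j$, since $s^c_j$ is a sum of whole block sizes $s_1+\cdots+s_j$ and the $(j{+}1)$st diagonal block of $Z$ acts after the block boundary. The Hessenberg case \cite[Theorem 5.6]{Camps2018} applied to $(\hat A,\hat B)$ gives $\mathcal K^{\text{rat}}_{s^c_j+1}(\hat A,\hat B,\bm e_1,(\pi_1(\Xi^1),\hdots,\pi_j(\Xi^j)))=\mathcal E_{s^c_j+1}$ — here one uses that the first $s^c_j$ poles of the Hessenberg pair are exactly the poles of the first $j$ blocks, in some order, which is guaranteed by the block structure of the reduction. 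Applying $Z$ and using $Z\mathcal E_{s^c_j+1}=\mathcal E_{s^c_j+1}$, and then replacing $\bm e_1$ by $Z^*\bm e_1$ only changes the starting vector within $\mathcal E_{s_1}$ in a way that does not affect the span for $j\geq 1$ — actually the cleanest route is the reverse: start from the conclusion for $(A,B)$ with starting vector $\bm e_1$, transform to $(\hat A,\hat B)$ with starting vector $Z^*\bm e_1$, and note $Z^*\bm e_1$ is a nonzero vector in $\mathcal E_{s_1}$ which, by properness of the Hessenberg pair, still generates the nested spaces $\mathcal E_{s^c_j+1}$. I would take care to handle the $j=0$ case ($\mathcal E_1=\mathcal R(\bm e_1)$, trivial) and $j=m$ ($\mathcal E_n$) separately as degenerate instances.

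For \cref{eq:Lratspan} the argument is parallel but with two wrinkles. The starting vector $\bm z_1$ is the right eigenvector of the pole pencil for pole $\xi^1_{s_1}$; under the reduction to Hessenberg form the pole pencil becomes triangular with $\xi^1_{s_1}$ appearing as one of its first $s_1$ diagonal entries (after choosing $\pi_1$ to place it last among the first block), so $Z^*\bm z_1$ becomes the corresponding canonical-basis-aligned eigenvector, landing in $\mathcal E_{s_1}$; this is precisely the hypothesis under which the Hessenberg $\mathcal L^{\text{rat}}$ statement of \cite[Theorem 5.6]{Camps2018} gives $\mathcal E_{s^c_j}$. The pole tuple $(\breve\Xi^1,\Xi^2,\hdots,\Xi^j)$ with $\breve\Xi^1$ dropping $\xi^1_{s_1}$ matches the fact that the first block of the reduced pole pencil contributes $s_1-1$ poles to the $\mathcal L$-recursion of length $s^c_j$ once its $s_1$-th pole is consumed by the eigenvector starting vector. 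Again I would invoke commutativity of the $N$-matrices to absorb the permutations $\pi_i$, and $Z\mathcal E_{s^c_j}=\mathcal E_{s^c_j}$ (now $s^c_j$ is an exact block boundary, so this is immediate) to push the Hessenberg conclusion back to $(A,B)$.

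The main obstacle I anticipate is the bookkeeping around the pole ordering: \Cref{lemma:blockHessToHess} only controls the poles of the reduced Hessenberg pair up to within-block permutations, whereas \cite[Theorem 5.6]{Camps2018} presumably wants a specific ordered pole tuple. Resolving this cleanly requires the remark that the rational Krylov subspaces of \cref{eq:rksubspace_definition} depend only on the \emph{multiset} of the first $k-1$ poles (via commutativity of $M$ and $N$), so any permutation that respects the block boundaries — in particular, any $\pi_j$ delivered by the lemma — yields the same subspace; and for \cref{eq:Lratspan} additionally that the chosen representative of $\xi^1_{s_1}$ can be taken to be the one the lemma happens to put last in the first block, which is legitimate since the theorem statement itself says ``$\xi^1_{s_1}$ can be any of the poles in $\Xi^1$.'' Once that is stated carefully, the rest is the routine conjugation-and-span computation sketched above.
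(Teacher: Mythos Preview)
Your overall strategy---reduce to the Hessenberg case via \Cref{lemma:blockHessToHess}, invoke \cite[Theorem~5.6]{Camps2018}, and use the block-diagonal structure of $Q,Z$ to preserve the spaces $\mathcal{E}_k$ at block boundaries---is exactly the paper's approach, and your remark that commutativity of the $M$ (resp.\ $N$) matrices absorbs the within-block permutations $\pi_j$ is the right way to handle the ordering ambiguity.

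However, you have the conjugation of $M$ wrong, and this is what sends you into the weeds. Writing $\hat M$ for the elementary rational matrix of $(\hat A,\hat B)=Q^*(A,B)Z$, one has
\[
\hat M(\varrho,\xi)=Q^*\LinOp{\mu}{\nu}Z\cdot Z^{-1}\LinOpInv{\alpha}{\beta}Q=Q^*M(\varrho,\xi)Q,
\]
so it is the $Z$-factors that cancel in $M$, not the $Q$-factors; your claim $\hat M=Z^{-1}MZ$ is false. (For $N$ you are right: $\hat N=Z^*NZ$.) With the correct formula the relation is
\[
\mathcal{K}^{\text{rat}}_k(\hat A,\hat B,Q^*\bm v,\hat\Xi)=Q^*\,\mathcal{K}^{\text{rat}}_k(A,B,\bm v,\hat\Xi),
\]
and since $Q=\diag(1,Q_1,\hdots,Q_m)$ one has $Q^*\bm e_1=\bm e_1$. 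The starting-vector difficulty you wrestle with (``$Z^*\bm e_1$ is a nonzero vector in $\mathcal E_{s_1}$ which, by properness, still generates\ldots'') simply evaporates; there is nothing to argue. Then \cite[Theorem~5.6]{Camps2018} gives $\mathcal{E}_k$ directly, and $Q\mathcal{E}_{s^c_j+1}=\mathcal{E}_{s^c_j+1}$ finishes \cref{eq:Kratspan}. Your attempted workaround (``the cleanest route is the reverse: start from the conclusion for $(A,B)$\ldots'') is circular, since that conclusion is what you are proving.

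For \cref{eq:Lratspan} your conjugation is correct, but the Schur ordering should place $\xi^1_{s_1}$ \emph{first} in the first block, not last: in the Hessenberg theorem the $\mathcal{L}$-space starts from $\bm e_1$ and omits the \emph{first} pole, so you need $Z^*\bm z_1=\bm e_1$, which holds precisely when $Z_1\bm e_1$ is the eigenvector for $\xi^1_{s_1}$.
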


\begin{proof}
  We rely on the transformation $(\hat{A}, \hat{B}) = Q^* (A,B) Z$ from proper block
  Hessenberg pencil $(A,B)$ to proper Hessenberg pencil $(\hat{A},\hat{B})$ as proposed in
  \Cref{thm:implicitQ}, relying on block diagonal unitary matrices. We know that Theorem
  5.6 of Camps et al.\ \cite{camps2019rational} links rational Krylov subspaces built from
  Hessenberg pencils to the subspaces $\mathcal{E}_j$; the block diagonal unitary
  equivalence transformation messes this up a little, the equality is now only valid for
  particular values of $j$, such as proposed in \eqref{eq:Kratspan}.

\Cref{eq:Lratspan} is slightly more involved. The transformation to Hessenberg pencil
requires to use a new elementary rational matrix 
$\hat{N}(\varrho,\xi) = Z^* N(\varrho,\xi) Z$. With this $\hat{N}$ we can fall back on
\cite[Theorem 5.6]{camps2019rational} to prove the theorem.
%
\end{proof}

Again we see that operating with block Hessenberg pencils destroys the properties in the
blocks, but in between blocks theorems as in \cite{camps2019rational} remain true.

\subsection{Correctness of the algorithm}
We prove correctness of the procedure (see \Cref{ssec:manipulating}) to introduce the poles in the
algorithm, that is, replacing the first $\ell$ poles by $\ell$ new ones, named the shift-poles.
From \cref{eq:rksubspace_definition} and \Cref{thm:blockHessspaces} we have 
that,
\begin{equation}
  \label{eq:vecx}
\bm{x} \in \mathcal{K}^{\text{rat}}_{\ell+1}(A,B,\bm{e}_1,\Xi) = \mathcal{E}_{\ell+1}.
\end{equation}
This implies that $Q$ in \cref{eq:Qmultishift} is of the form $\text{diag}(Q_{\ell+1},I)$,
with $Q_{\ell+1}$ an $(\ell{+}1){\times}(\ell{+}1)$ unitary matrix.
Furthermore, for $j = 0, 1, \hdots, m-i+1$,
\begin{eqnarray*}
& & \mathcal{K}^{\text{rat}}_{\hat{s}_1+\cdots+\hat{s}_{j}+1}(\hat{A},\hat{B},\bm{e}_1,
(\Rho, \Xi^{i+1}, \hdots, \Xi^m)) \\
& = & \prod_{k=1}^{\hat{s}_1+\cdots+\hat{s}_{j}} \hat{M}(\hat{\varrho},\hat{\xi}_k) \cdot \mathcal{K}_{\hat{s}_1+\cdots+\hat{s}_{j}+1}(\hat{M}(\check{\varrho},\hat{\varrho}),\bm{e}_1) \\
& = &Q^* M(\hat{\varrho},\varrho_1) \cdots M(\hat{\varrho},\varrho_{\ell}) M(\hat{\varrho},\xi_{\ell+1}) \cdots M(\hat{\varrho},\xi_{\hat{s}_1+\cdots+\hat{s}_{j}}) \cdot \mathcal{K}_{\hat{s}_1+\cdots+\hat{s}_{j}+1}(M(\check{\varrho},\hat{\varrho}),\bm{q}_1) \\
& = &Q^* M(\hat{\varrho},\varrho_1) \cdots M(\hat{\varrho},\varrho_{\ell})
      M(\hat{\varrho},\xi_{\ell+1}) \cdots
      M(\hat{\varrho},\xi_{\hat{s}_1+\cdots+\hat{s}_{j}}) \\
      & & \quad \cdot \mathcal{K}_{\hat{s}_1+\cdots+\hat{s}_{j}+1}\left(M(\check{\varrho},\hat{\varrho}),\prod_{k=1}^{\ell} M(\varrho_k,\xi_k\right)\bm{e}_1) \\
& = &Q^* M(\hat{\varrho},\xi_1) \cdots M(\hat{\varrho},\xi_{\ell}) M(\hat{\varrho},\xi_{\ell+1}) \cdots M(\hat{\varrho},\xi_{\hat{s}_1+\cdots+\hat{s}_{j}}) \cdot \mathcal{K}_{\hat{s}_1+\cdots+\hat{s}_{j}+1}(M(\check{\varrho},\hat{\varrho}),\bm{e}_1) \\
& = &Q^* \mathcal{K}^{\text{rat}}_{\hat{s}_1+\cdots+\hat{s}_{j}+1}(A,B,\bm{e}_1, (\Xi^1, \hdots, \Xi^i, \Xi^{i+1}, \hdots, \Xi^m)) \\
& = & Q^* \mathcal{E}_{\hat{s}_1+\cdots+\hat{s}_{j}+1} = \mathcal{E}_{\hat{s}_1+\cdots+\hat{s}_{j}+1}.
      \end{eqnarray*}
In the first equality we used \cref{eq:rksubspace_definition},
we applied $\hat{M} = Q^* M Q$ in the
second equality, and combined \cref{eq:vec_multishift,eq:Qmultishift} to get
$\bm{q}_1 = \prod_{k=1}^{\ell} M(\varrho_k,\xi_k) \bm{e}_1$ in the third equality.
The fourth equality uses the commutativity of the $M$ matrices and the property that
$M(\hat{\varrho},\varrho_k) M(\varrho_k,\xi_k)$ can be merged to $M(\hat{\varrho},\xi_k)$.
This results in the rational Krylov subspace of the original pencil with the original
poles in the fifth equality and by \Cref{thm:blockHessspaces} we know that this is equal to
$\mathcal{E}_{\hat{s}_1+\cdots+\hat{s}_{j}+1}$.
Finally, since $Q$ has a block diagonal structure,
it does not affect the $\mathcal{E}_{\hat{s}_1+\cdots+\hat{s}_{j}+1}$ for
the given sizes.
It is clear that $(\hat{A}, \hat{B})$ is a proper block Hessenberg pencil with partition
$\hat{\bm{s}} = (\ell, s_{i+1}, \hdots, s_m)$ by construction.
The last poles are unchanged by the block diagonal structure of $Q$ and the first $\ell$
poles are changed to $\Rho$ which follows from the uniqueness of block Hessenberg pencils,
see \Cref{thm:implicitQ}.

\subsection{Convergence}
\label{sec:convergence}

In \cite[Theorem 6.1]{camps2019rational} it is shown that an RQZ step with shift $\varrho$
on a Hessenberg pencil with pole tuple $\Xi = (\xi_1, \hdots, \xi_{n-1})$ and new
pole $\xi_n$ performs nested subspace iteration accelerated by
\begin{equation}
q_{k}^{Q}(z) = \frac{z - \varrho}{z - \xi_k},
\quad \text{and} \quad
q_{k}^{Z}(z) = \frac{z - \varrho}{z - \xi_{k+1}},
\end{equation}
for the $k$th column vector of respectively $Q$ and $Z$. Based on \Cref{thm:implicitQ}
this can be extended to block Hessenberg pencils
where in the multishift, multipole RQZ method convergence will now only hold for
particular values of $k$, which correspond to the in-between block positions.
Again shifts that have been swapped along the subdiagonal
of the block Hessenberg pencil will lead to deflations at the end of the pencil, while
poles that have been moved to the front of the pencil lead to convergence of eigenvalues at the
beginning. This holds under the assumption that a good choice of poles and shifts is made.

\section{Conclusion and future work}
\label{sec:conclusion}

In this paper we have generalized the rational QZ method from Hessenberg to block
Hessenberg pencils.  This allows for the use of complex conjugate shifts and poles in real
arithmetic.  In the spirit of recent developments we used small shift and pole
multiplicities, packed optimally together.   We also implemented the aggressive early deflation
strategy for block Hessenberg pencils.  Numerical experiments indicated that this
combination leads to an efficient algorithm for the generalized eigenvalue problem that is
competitive with state of the art implementations.

Important directions for future research include an efficient reduction to Hessenberg,
Hessenberg form and investigating whether it is possible to streamline the swapping
procedure so that only a single, particular swap, e.g. $2\times 2$ with $1\times 1$ would be
used. This would simplify the code and create an additional speed up.

\section*{Acknowledgements}

The authors are grateful to Paul Van Dooren and Nicola Mastronardi for their help
with the iterative refinement procedure for $2{\times}2$ with $2{\times}2$
swaps \cite{Camps2019} which was essential for handling $2{\times}2$ blocks accurately.

Reviewers commented on the article asking major changes with respect to organization, theory,
and software; and we are grateful for these requests as they have significantly improved
this paper.

\bibliographystyle{siamplain}
\bibliography{longstrings,./bibliography_rqz2}

\end{document}